\def\ps@pprintTitle{%
  \let\@oddhead\@empty
  \let\@evenhead\@empty
  \def\@oddfoot{\reset@font\hfil\thepage\hfil}
  \let\@evenfoot\@oddfoot
}
\newtheorem{theorem}{Theorem}[section]
\newtheorem{lemma}[theorem]{Lemma}
\newtheorem{algorithm}[theorem]{Algorithm}
\theoremstyle{definition}
\newtheorem{definition}[theorem]{Definition}
\newcommand\old[1]{}
\begin{document}
\begin{frontmatter}
\title{Half-regular factorizations of the complete bipartite graph\tnoteref{ack}}
\author[bsm]{Mark Aksen \fnref{Mark}}
\author[bsm,renyi,sztaki]{Istvan Miklos\fnref{Istvan}}
\author[bsm]{Kathleen Zhou\fnref{Kate}}

\address[bsm]{Budapest Semesters in Mathematics, Bethlen G\'abor t\'er 2, Budapest, 1071 Hungary\\
{\tt email:} maksen@princeton.edu, zhou.kat@gmail.com}

\address[renyi]{Alfr\'ed R{\'e}nyi Institute, Re\'altanoda u 13-15, Budapest, 1053 Hungary\\
{\tt email:} miklos.istvan@renyi.mta.hu}

\address[sztaki]{Institute for Computer Science and Control, L\'agym\'anyosi \'ut 11, Budapest, 1111 Hungary}
\fntext[miklos]{Partly supported by Hungarian NSF, under contract K116769.}
\begin{abstract}
We consider a bipartite version of the color degree matrix problem.
A bipartite graph $G(U,V,E)$ is half-regular if all vertices in $U$ have the same degree.
We give necessary and sufficient conditions for a bipartite degree matrix (also known as demand matrix) to be the color degree matrix of an edge-disjoint union of half-regular graphs. We also give necessary and sufficient perturbations to transform realizations of a half-regular degree matrix into each other. Based on these perturbations, a Markov chain Monte Carlo method is designed in which the inverse of the acceptance ratios are polynomial bounded.

Realizations of a half-regular degree matrix are generalizations of Latin squares, and they also appear in applied neuroscience. 
\end{abstract}
\begin{keyword}
\MSC[2010]{05C07}\\
Degree sequences; Degree matrix; Graph factorization; Edge packing; Latin squares; Markov chain 
Monte Carlo
\end{keyword}
\end{frontmatter}

\section{Introduction}\label{sec:intro}
Consider an edge weighted directed graph as a model of a neural network. Such network can be build up using real life measurements (see for example, \cite{markovetal2014}), and neuroscientists are interested in comparing this network with random networks. If the edges were not weighted, the typical approach would be to generate random graphs with prescribed in and out degrees. This topic has a tremendous literature, see for example, \cite{lamar2009,greenhill2011,kimetal2012,delgenio2014,ekms2015}, just to mention a few.

In case of weights are introduced, one might want to generate a random graph that keeps not only the sum of the weights but also individual weights. The weights can be transformed into colors, and then we are looking for an edge colored graph with prescribed in and out degrees for each color. This problem is known as finding edge packing \cite{buschetal2012}, edge disjoint realizations \cite{gmt2011}, or degree constrained edge-partitioning \cite{bentzetal2009}. The problem has also applications in discrete tomography \cite{bentzetal2009,dgm2012}. Above finding one edge colored graph with given constraints, it is also an important problem how to generate a "typical" solution, as we can see in the before mentioned neuroscientific problem.

Unfortunately, the general edge packing problem is NP-complete even if the number of colors is 2 \cite{cd1998,ggd1999,dgm2012}. Although the general problem is NP-complete, special cases are tractable. Such special cases include the case when the number of colors is 2 and the subgraph with one of the colors is almost regular \cite{kundu1974,kw1973,chen1988}. Another tractable case is when the graph is bipartite, the number of colors is 2, and there exist constants $k_1$ and $k_2$ such that for each vertex, the total number of edges in one of the vertex class is $k_1-1$, $k_1$ or $k_1+1$ and in the other vertex class is $k_2-1$, $k_2$ or $k_2+1$ \cite{gmt2011}. When the number of colors is unlimited, tractable solutions exist if the graphs are forests for each color \cite{bentzetal2009} or the graphs are forests for each color except one of the colors might contain one cycle \cite{hmcd2015}.

A special case is when the bipartite graph is the union of $n$ 1-factors. Such edge packings are simply the Latin squares. For any $n$, Latin squares exist, and even a Markov chain is known that explores the space of Latin squares for a fixed $n$ \cite{jm1996}. This Markov chain is conjectured to be rapidly mixing, that is, computationally efficient to sample random Latin squares. The conjecture has neither proved nor disproved in the last twenty years.

In this paper, we consider another tractable case with unlimited number of colors. We require that the subgraphs for each color be half-regular, that is, the degrees are constant on one of the vertex class. We further require that this regular vertex class be the same for all colors. We give sufficient and necessary conditions when this edge packing problem has a solution. We also give necessary and sufficient perturbations to transform solutions into each other. A Markov chain Monte Carlo method has been given using these perturbations, and we give a proof that the inverse of the acceptance ratio is polynomial bounded.

\section{Preliminaries}\label{sec:pre}

In this paper, we are going to work with realizations of half-regular degree matrices, defined below.

\begin{definition}
A \emph{bipartite degree sequence} $D =\left(( d_1, d_2, \ldots d_n), (f_1, f_2, \ldots f_m)\right)$ is a pair of sequences of non-negative integers. A bipartite degree sequence is \emph{graphical} if there exists a simple bipartite graph $G$ whose degrees correspond exactly to $D$. We say that $G$ is a \emph{realization} of $D$.
\end{definition}

\begin{definition}
A \emph{bipartite degree matrix} $\mathcal{M} = (D,F)$  is a pair of $k \times n$ and $k \times m$ matrices of non-negative integers. A bipartite degree matrix is graphical if there exists an edge colored simple bipartite graph $G(U,V,E)$ such that for all color $c_i$ and for all $u_j \in U$, the number of edges of $u_j$ with color $c_i$ is $d_{i,j}$ and for all $v_l \in V$, the number of edges of $v_l$ with color $c_i$ is $f_{i,l}$. Such graph is called a \emph{realization} of $\mathcal{M}$. 
A bipartite degree matrix is \emph{half-regular} if for all $i,j,l$, $d_{i,j} = d_{i,l}$.

The rows of $\mathcal{M}$ are bipartite degree sequences that are also called \emph{factor}s and the edge colored realization of $\mathcal{M}$ is also called an \emph{$\mathcal{M}$-factorization}.
\end{definition}

We consider two problems. One is the existence problem which asks the question if there is a realization of a half-regular degree matrix. The other is the sampling problem, which considers the set of all realizations of a half-regular degree matrix and asks the question how to sample uniformly a realization from this set. Markov Chain Monte Carlo (MCMC) methods are generally applicable for such problems, and we are also going to introduce an MCMC for sampling realizations of half-regular degree sequences. Below we introduce the main definitions.

\begin{definition}
A \emph{discrete time, finite Markov chain} is a random process that undergoes transitions from one state to another in a finite state space. The process has the Markov property which means that the distribution of the next state depends only on the current state. The transition probabilities can be described with a transition matrix $\mathbf{T} = \{t_{i,j}\}$, where $t_{i,j} := P(x_i|x_j)$, namely, the conditional probability that the next state is $x_i$ given that the current state is $x_j$. When the state space is a large set of combinatorial objects, the transition probabilities are
not given explicitly, rather, a random algorithm is given that generates a random $x_i$ by perturbing the current state $x_j$. Such algorithm is called \emph{transition kernel}. 
\end{definition}

When the process starts in a state $x_i$, after $t$ number of steps, it will be in a random state
with distribution $\mathbf{T}^t \mathbf{1}_i$, where $\mathbf{1}_i$ is the column vector containing all $0$'s except for coordinate $i$, which is $1$. The Markov Chain Monte Carlo method is the way to tailor the transition probabilities such that the limit distribution $\lim_{t \rightarrow \infty}\mathbf{T}^t \mathbf{1}_i$ be a prescribed distribution. To be able to do this, necessary conditions are that the Markov chain be irreducible, aperiodic and the transitions be reversible defined below.

\begin{definition}
Given a discrete time, finite Markov chain on the state space $\mathcal{I}$, the \emph{Markov graph} of the Markov chain is a directed graph $G(V,E)$, with vertex set $\mathcal{I}$ and there is an edge from $v_i$ to $v_j$ iff $P(v_j|v_i) \ne 0$. A Markov chain is \emph{irreducible} iff its Markov graph is strongly connected. When the transition kernel of an irreducible Markov chain generates a class of perturbations, we also say that this class of perturbations is irreducible on the state space.

A Markov chain is \emph{aperiodic} if the largest common divisor of cycle lengths of its Markov graph is 1. This automatically holds, if there is a loop in the Markov chain, that is, a state $x$ exists for which $P(x|x) \ne 0$.
\end{definition}

\begin{definition}
The transition kernel of a Markov chain is reversible if for all $x_i, x_j$, $P(x_i|x_j) \ne 0 \Leftrightarrow P(x_j|x_i) \ne 0$.
\end{definition}

The strength of the theory of Markov Chain Monte Carlo is that any irreducible, aperiodic Markov chain with reversible transition kernel can be tailored into a Markov chain converging to a prescribed distribution as stated below.

\begin{theorem} \cite{metropolisetal1953,hastings1970}
Let an irreducible, aperiodic Markov chain be given with reversible transition kernel $T$ over the finite state space $\mathcal{I}$. Let $\pi$ be a distribution over $\mathcal{I}$, for which $\forall x \in \mathcal{I}$, $\pi(x) \ne 0$. Then the following algorithm, called the Metropolis-Hastings algorithm, also defines a Markov chain that converges to $\pi$, namely, its transition kernel $\mathbf{T'}$ satisfies $\lim_{t\rightarrow \infty} \mathbf{T'}^t \mathbf{1}_i = \pi$ for all indices $i$.
\begin{enumerate}
\item Draw a random $y$ following the distribution $T(\cdot|x_t)$ where $x_t$ is the current state of the Markov chain after $t$ steps.
\item Draw a random $u$ following the uniform distribution over $[0,1]$. The next state, $x_{t+1}$, will be $y$ if 
$$u \le \min\left\{1, \frac{\pi(y)T(x_t|y)}{\pi(x_t)T(y|x_t)}\right\}$$
and $x_t$ otherwise.
\end{enumerate}
\end{theorem}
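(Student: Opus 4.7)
The plan is to verify that the Metropolis–Hastings kernel $T'$ is a well-defined transition kernel on $\mathcal{I}$, admits $\pi$ as a stationary distribution via the detailed balance equations, and inherits irreducibility and aperiodicity from $T$; then the standard convergence theorem for finite ergodic Markov chains yields the claim.

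First I would write down $T'$ explicitly. For $x \neq y$ put
\[
T'(y|x) \;=\; T(y|x)\,\min\!\left\{1,\frac{\pi(y)T(x|y)}{\pi(x)T(y|x)}\right\},
\]
with $T'(y|x) := 0$ whenever $T(y|x) = 0$ (the ratio is never $0/0$ since $\pi$ is strictly positive, and reversibility of $T$ guarantees $T(y|x) = 0 \Leftrightarrow T(x|y) = 0$). The self-loop probability is then $T'(x|x) = 1 - \sum_{y \neq x} T'(y|x) \geq 0$, so $T'$ is a stochastic matrix.

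The key step is detailed balance: I want $\pi(x)T'(y|x) = \pi(y)T'(x|y)$ for all $x \neq y$. The identity is trivial if $T(y|x) = 0$. Otherwise, without loss of generality suppose $\pi(y)T(x|y) \leq \pi(x)T(y|x)$. Then the min on the $x \to y$ side evaluates to the ratio, giving $\pi(x)T'(y|x) = \pi(y)T(x|y)$, while the min on the $y \to x$ side equals $1$, giving $\pi(y)T'(x|y) = \pi(y)T(x|y)$. The two sides agree, so $\pi$ is stationary for $T'$.

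Next I verify ergodicity of $T'$. Because $\pi > 0$ and the acceptance factor is strictly positive whenever it is defined, $T'(y|x) > 0$ iff $T(y|x) > 0$; hence the Markov graph of $T'$ coincides with that of $T$ and is strongly connected, i.e.\ $T'$ is irreducible. For aperiodicity, either $T$ is already aperiodic (and since the directed edge structure is preserved the same gcd of cycle lengths applies to $T'$), or rejections create self-loops $T'(x|x) > 0$, which immediately forces aperiodicity. Either way $T'$ is aperiodic. Finally, the Perron–Frobenius / fundamental theorem for finite irreducible aperiodic Markov chains states that such a chain has a unique stationary distribution and converges to it from every starting state; since we have already shown $\pi$ is stationary, $\lim_{t\to\infty} \mathbf{T'}^t \mathbf{1}_i = \pi$, as required.

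The only delicate part is the detailed balance computation, and specifically the case analysis on which side of the $\min$ is active; everything else is bookkeeping plus an appeal to the standard convergence theorem, so I do not anticipate a real obstacle.
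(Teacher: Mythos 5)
The paper itself gives no proof of this statement: it is quoted as the classical Metropolis--Hastings theorem with citations to Metropolis et al.\ (1953) and Hastings (1970), so there is nothing internal to compare against. Your argument is the standard textbook proof and it is correct: you define the off-diagonal entries of $T'$ as proposal times acceptance probability, absorb the rejected mass into the diagonal so that $T'$ is stochastic, verify detailed balance $\pi(x)T'(y|x)=\pi(y)T'(x|y)$ by the case analysis on which side of the $\min$ is active (using $\pi>0$ and the reversibility hypothesis $T(y|x)\ne 0\Leftrightarrow T(x|y)\ne 0$ to ensure the acceptance ratio is well defined and strictly positive), observe that $T'$ therefore has the same Markov graph as $T$ off the diagonal and hence inherits irreducibility and aperiodicity (the possible extra self-loops from rejection can only help the gcd condition), and conclude by the fundamental convergence theorem for finite irreducible aperiodic chains. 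This is exactly the argument the cited sources give; no gaps.
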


The ratio in the second step of the algorithm is called the Metropolis-Hastings ratio. It is simplified
to $\frac{T(x_t|y)}{T(y|x_t)}$ when the target distribution $\pi$ is the uniform one. When this ratio is small, the Markov chain defined by the algorithm tends to remain in the same state for a long time, thus increasing the mixing time, namely, the number of steps necessary to get close to the target distribution.

In this paper, we introduce perturbations that are irreducible on the realizations of a half-regular degree matrix. Using these perturbations, we design a reversible transition kernel, for which 
\begin{equation}
\frac{T(x_t|y)}{T(y|x_t)} \ge \frac{2}{m^5} \,\,\,\forall x_t, y
\label{eq:highratio}
\end{equation}
where $m$ is the number of vertices in vertex class $V$.

\section{The existence problem}

\begin{definition}
Let $G(V,E)$ be a multigraph, and for each $u,v \in V$, let $f(u,v)$ be
\begin{equation}
f(u,v) := \begin{cases}
0 & \mathrm{if} \ (u,v) \notin E \\
m(u,v) - 1 & \mathrm{otherwise}
\end{cases}
\end{equation}
where $m(u,v)$ denotes the multiplicity of edge $(u,v)$. The \emph{exceed number} of graph $G$ is defined as
\begin{equation}
ex(G) := \sum_{u,v \in V} f(u,v)
\end{equation}
\end{definition}

Clearly, the exceed number is $0$ iff $G$ is a simple graph.

\begin{theorem}\label{the:regular-factorization}
Let
\begin{eqnarray}
\mathcal{M} = \{\{d_{1,1} = d_{1,2} = \ldots = d_{1,n}\}, && \{f_{1,1}, f_{1,2}, \ldots f_{1,m}\} \nonumber\\
\{d_{2,1} = d_{2,2} = \ldots = d_{2,n}\}, && \{f_{2,1}, f_{2,2}, \ldots f_{2,m}\} \nonumber \\
\vdots & & \nonumber \\ 
\{d_{k,1} = d_{k,2} = \ldots = d_{k,n}\}, && \{f_{k,1}, f_{k,2}, \ldots f_{k,m}\}\}
\end{eqnarray}
be a half-regular bipartite degree matrix. The bipartite complete graph $K_{n,m}$ has an $\mathcal{M}$-factorization iff
\begin{enumerate}
\item $\forall i$, $n d_{i,1} = \sum_{j=1}^m f_{i,j}$
\item $\sum_{i=1}^k d_{i,1} = m$
\item $\forall j$, $\sum_{i=1}^k f_{i,j} =n$.
\end{enumerate}
\end{theorem}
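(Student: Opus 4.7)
Necessity of the three conditions is a direct edge-counting check on $K_{n,m}$: condition (1) counts the edges of color $i$ from the $U$-side and the $V$-side, while (2) and (3) count the total edges at each $u_j$ and each $v_l$ respectively. So the real content is sufficiency.

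For sufficiency I would reformulate the problem as an array-filling question. An $\mathcal{M}$-factorization of $K_{n,m}$ is the same data as an $n \times m$ array $A$ with entries in $\{1,\ldots,k\}$, where $A_{j,l}$ is the color assigned to the edge $u_j v_l$. The half-regularity assumption together with (2) forces every row to contain color $i$ exactly $d_i$ times (the row multiset $(1^{d_1}, 2^{d_2}, \ldots, k^{d_k})$ is the same for every $j$), and the degree-matrix definition together with (3) forces column $l$ to contain color $i$ exactly $f_{i,l}$ times. So I must show that such an array exists whenever (1)--(3) hold.

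The plan is induction on $n$. For $n=1$, condition (3) forces each column $l$ to contain a unique color $i(l)$ with $f_{i(l),l}=1$, and condition (1) guarantees $|\{l : i(l)=i\}| = d_i$, so the single row has the required multiset. For the inductive step I would build the last row first: I pick an assignment $l \mapsto c_l$ that uses each color $i$ exactly $d_i$ times and satisfies $f_{c_l,l} \ge 1$ for every $l$. Defining $f'_{i,l} := f_{i,l} - [c_l = i]$, one checks immediately that the three conditions are inherited by the reduced $(n-1) \times m$ problem (condition (2) is untouched, condition (3) drops by $1$ in every column, and condition (1) drops by $d_i$ for each color), so the inductive hypothesis furnishes the remaining rows.

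The main step is therefore producing such an admissible last row, and this is where I expect the real work. I would set up a bipartite auxiliary graph between the $m$ column-positions on one side and a pool of ``color-tokens'' on the other, where color $i$ contributes $d_i$ tokens and position $l$ is adjacent to every color-$i$ token whenever $f_{i,l} > 0$; an admissible last row is then a perfect matching. To verify Hall's condition, fix $T \subseteq [m]$ and let $\bar I(T) := \{i : f_{i,l} = 0 \text{ for every } l \in T\}$. For each $i \in \bar I(T)$ all $n d_i$ occurrences of color $i$ lie in columns outside $T$, so summing condition (3) over $l \notin T$ yields $n \sum_{i \in \bar I(T)} d_i \le \sum_{l \notin T} \sum_i f_{i,l} = n(m - |T|)$; combining with condition (2) gives $\sum_{i \notin \bar I(T)} d_i \ge |T|$, which is exactly the number of tokens reachable from $T$. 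This double-counting is the technical heart of the argument; the rest is bookkeeping.
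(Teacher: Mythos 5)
Your proof is correct, but it takes a genuinely different route from the paper's. The paper proves sufficiency by first taking, for each color $i$ independently, an arbitrary realization $G_i$ of the $i$th row of $\mathcal{M}$, superimposing them into a (generally multi-)graph $G=\cup_{i}G_i$, and then running a local repair procedure: it defines the \emph{exceed number} $ex(G)$ measuring how far $G$ is from being simple, and shows by a rather delicate iterative swap argument (three cases, with an alternating-chain construction through sets $V_0,V_1,\ldots,V_j$ in the hard case) that whenever $ex(G)>0$ one can exchange same-colored edges between two vertices $u,u'$ of the regular class so that every color class remains a realization of its row while $ex(G)$ strictly decreases; termination yields $K_{n,m}$. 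You instead recast the factorization as an $n\times m$ color array with prescribed row multiset $(1^{d_1},\ldots,k^{d_k})$ and prescribed column frequencies $f_{i,l}$, and build it one row at a time, peeling off a vertex of the regular class $U$ in each step; the existence of a new row is a perfect matching between positions and color tokens, and your Hall-condition verification (summing condition (1) over the colors absent from $T$ and condition (3) over the columns outside $T$, then invoking condition (2)) is complete and correct, as is the check that the reduced instance satisfies (1)--(3) with $n$ replaced by $n-1$ and that $f'_{i,l}\ge 0$. Your argument is shorter and sidesteps the paper's most intricate case analysis at the cost of importing Hall's theorem; it is essentially the classical construction of frequency squares. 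The paper's argument is self-contained and is closer in spirit to the swap-based perturbations developed in its later sections, although the exceed-number machinery is not actually reused there. Both approaches give polynomial-time constructions.
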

\begin{proof}
$\Rightarrow$ If $K_{n,m}$ has an $\mathcal{M}$-factorization, then clearly, all factors are graphical and the degrees sum to the degrees of the complete bipartite graph. Conditions $2$ and $3$ explicitly state that the sum of the degrees in the factors sum up to the degree of the complete bipartite graph. Condition $1$ states that in each factor, the sums of the degrees in the two vertex classes are the same, which is a necessary condition for a graphical degree sequence.

$\Leftarrow$ Conditions $2$ and $3$ also say implicitly that for each $i$, $d_{i,1} \le m$ and for each $i,j$, $f_{i,j} \le n$. Together with condition $1$, they are sufficient conditions that a half-regular bipartite degree sequence be graphical. Namely, the theorem says if all factors are graphical and the sums of the degrees are the degrees of the complete bipartite graph, then $K_{n,m}$ has such a factorization. 

For each $i$, let $G_i$ be a realization of the degree sequence $(d_{i,1}, \ldots d_{i,n}), (f_{i,1}, \ldots f_{i,m})$. Let $G = \cup_{i=1}^k G_i$. Color the edges of $G$ such that an edge is colored by color $c_i$ if it comes from the realization $G_i$. $G$ might be a multigraph, however, for each color $c_i$ and each pair of vertices $u,v$, there can be at most one edge between $u$ and $v$ with color $c_i$. If $ex(G) = 0$, then $G$ is a simple graph, and due to the conditions, it is $K_{n,m}$, thus we found an $\mathcal{M}$-factorization of $K_{n,m}$.

Assume that $ex(G) > 0$. Then there is a pair of vertices $u,v$ such that there are more than one edge between $u$ and $v$. Fix one such pair $(u,v)$, and let $V_0$ denote the set $\{v\}$. Since the degree of $v$ in $G$ is $n$, there must be a $u'$ such that there is no edge between $u'$ and $v$. Let $C_0$ denote the set of colors that appear as edge colors between $u$ and $v$. Since for each color $c_i$, $u$ and $u'$ has the same number of edges with color $c_i$, there must be at least one vertex $v'$ such that there are more edges with color from $C_0$ between $u'$ and $v'$ than the edges with color from $C_0$ between $u$ and $v'$. Let $V'$ denote the set of vertices for which this condition holds. There are three possibilities (see also Figure~\ref{fig:firstiteration}):
\begin{enumerate}
\item There is a vertex $v' \in V'$ such that there are more than one edge between $u'$ and $v'$.
\item There is a vertex $v' \in V'$ such that there is only a single edge between $u'$ and $v'$, however, there is no edge between $u$ and $v'$.
\item For each vertex $v' \in V'$, there is only a single edge between $u'$ and $v'$ and there is at least one edge between $u$ and $v'$.
\end{enumerate}
\begin{figure}\centering
\includegraphics[trim=6cm 2cm 8cm 10cm, angle=0, width=2.9in]{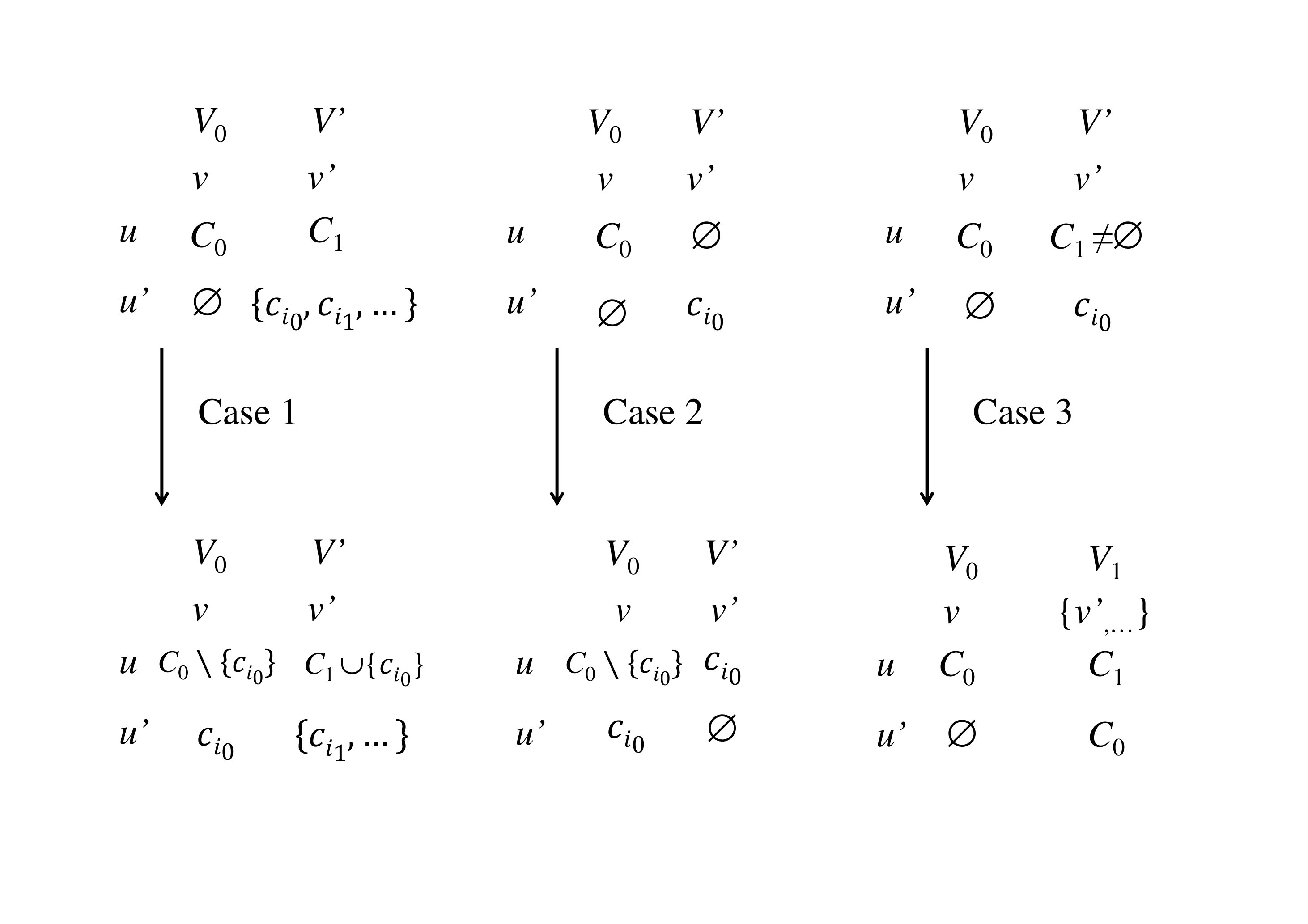}
\caption{The three possible cases when there are more than one edge between vertices $u$ and $v$. See text for details.}\label{fig:firstiteration}
\end{figure}
If case $1$ holds, then let $c_{i_0}$ be a color in $C_0$ such that there is a $c_{i_0}$ colored edge between $u'$ and $v'$ and there is no $c_{i_0}$ colored edge between $u$ and $v'$. Remove that edge between $u$ and $v$ and also between $u'$ and $v'$ and add a $c_{i_0}$ colored edge between $u'$ and $v$ and also between $u$ and $v'$. Note that $G_{i_0}$, the graph with color $c_{i_0}$ remains a simple graph with the prescribed degree sequence.
$f(u,v)$ decreases by $1$. $f(u',v)$ remain $0$, since the edge with color $c_{i_0}$ is the only edge between them. $f(u',v')$ decreases by $1$. $f(u,v')$ might increase by $1$ if there was already at least one edge between them before the change, otherwise it remains the same. Altogether, $ex(G)$ is decreased at least by $1$.

If case $2$ holds, then do the same edge changing than in case $1$. This causes a decrease in $f(u,v)$ by one, and no change in $f(u',v)$, $f(u,v')$ and $f(u',v')$. Altogether, $ex(G)$ decreases by $1$.

In case $3$, select a subset $V_1$ from $V'$ such that the colors of edges between $u'$ and the vertices in $V_1$ are exactly the set $C_0$. 
This is doable, since for each vertex $v' \in V'$, there is only one edge between  $u'$ and $v'$, furthermore, the union of colors of edges between $u'$ and $V'$ must contain a subset $C_0$.
Let $C_1$ denote the (possibly multi)set of colors that appear as edge colors between vertex $u$ and the set of vertices $V_1$. The multiset of colors between $u$ and $V_0 \cup V_1$ is $C_0 \uplus C_1$, where $\uplus$ denote the multiset union, while the set of colors between $u'$ and $V_0 \cup V_1$ is $C_0$. Therefore, there is at least one vertex $v' \notin V_0 \cup V_1$ such that the number of edges with colors from $C_1$ between $u'$ and $v'$ is more than the number of edges with color from $C_1$ between $u$ and $v'$. Let $V'$ denote the set of vertices with this property. If case $3$ holds, then we can select a subest $V_2$ from $V'$ such that the multiset of colors between vertex $u'$ and $V_2$ is exactly $C_1$. Then the multiset of colors between $u$ and $V_0 \cup V_1 \cup V_2$ is $C_0 \uplus C_1 \uplus C_2$, while the multiset of colors between $u'$ and $V_0 \cup V_1 \cup V_2$ is $C_0 \uplus C_1$.

Therefore, while case $3$ holds, we can select the subset of vertices $v' \notin \cup_{i=0}^{j-1} V_i$ such that the number of edges with colors from $C_{j-1}$ between $u'$  and $v'$ is more than the number of edges with colors from $C_{j-1}$ between $u$ and $v'$, and from this set, we can select an appropriate subset $V_{j}$. Since there are finite number of vertices in $V_j$, for some $j$, case $1$ or $2$ must hold (see also Figure~\ref{fig:iteration}). Let $v'$ be the vertex for which case $1$ or $2$ holds, and let $c_{i_{j-1}}$ be the color such that there is an edge with color $c_{i_{j-1}}$ between $v'$ and $u'$ and there is no such edge between $u$ and $v'$. Remove that edge between $u'$ and $v'$ and add between $u$ and $v'$. Let $v_{j-1} \in V_{j-1}$ be a vertex such that there is an edge between $u$ and $v_{j-1}$ with color $c_{i_{j-1}}$. Remove that edge and add between $u'$ and $v_{j-1}$. Let the color between $u'$ and $v_{j-1}$ be $c_{i_{j-2}}$. Remove it and add between $u$ and $v_{j-1}$. (Note that due to the definition of $V'$ and due to case $3$ held in the $j-1$st iteration, before the change, there was no $c_{i_{j-2}}$ colored edge between $u$ and $v_{j-1}$, thus after the change, all edges between $u$ and $v_{j-1}$ have different colors.) Iterate this process; in the $l$th iteration, let $v_{j-l}$ be a vertex in $V_{j-l}$ such that there is an edge with color $c_{i_{j-l}}$ between $u$ and $v_{j-l}$. Remove that edge and add between $u'$ and $v_{j-l}$, remove the edge with color $c_{i_{j-l-1}}$ between $u'$ and $v_{j-l}$ and add between $u$ and $v_{j-l}$. Finally, remove the edge with color $c_{i_0}$ between $u$ and $v$ and add it between $u'$ and $v$. Let this new graph be $G'$. Then in $G'$, $f(u,v)$ decreases by 1, $f(u',v)$ remains the same. For each $i=1,\ldots j-1$, neither $f(u,v_i)$ nor $f(u',v_i)$ is changed since the total number of edges between them is not changed. Finally, $f(u,v')+f(u',v')$ is not increased. Altogether, $ex(G')$ is at least $1$ less than $ex(G)$. 
On Figure~\ref{fig:iteration}, it is easy to verify that all vertices participating in the modification of $G$, the same colored edges were removed and added. Therefore, $G'$ is still the union of realizations of the prescribed degree sequences.
\begin{figure}\centering
\includegraphics[trim= 8cm 2cm 10cm 7cm,angle=0, width=2.3in]{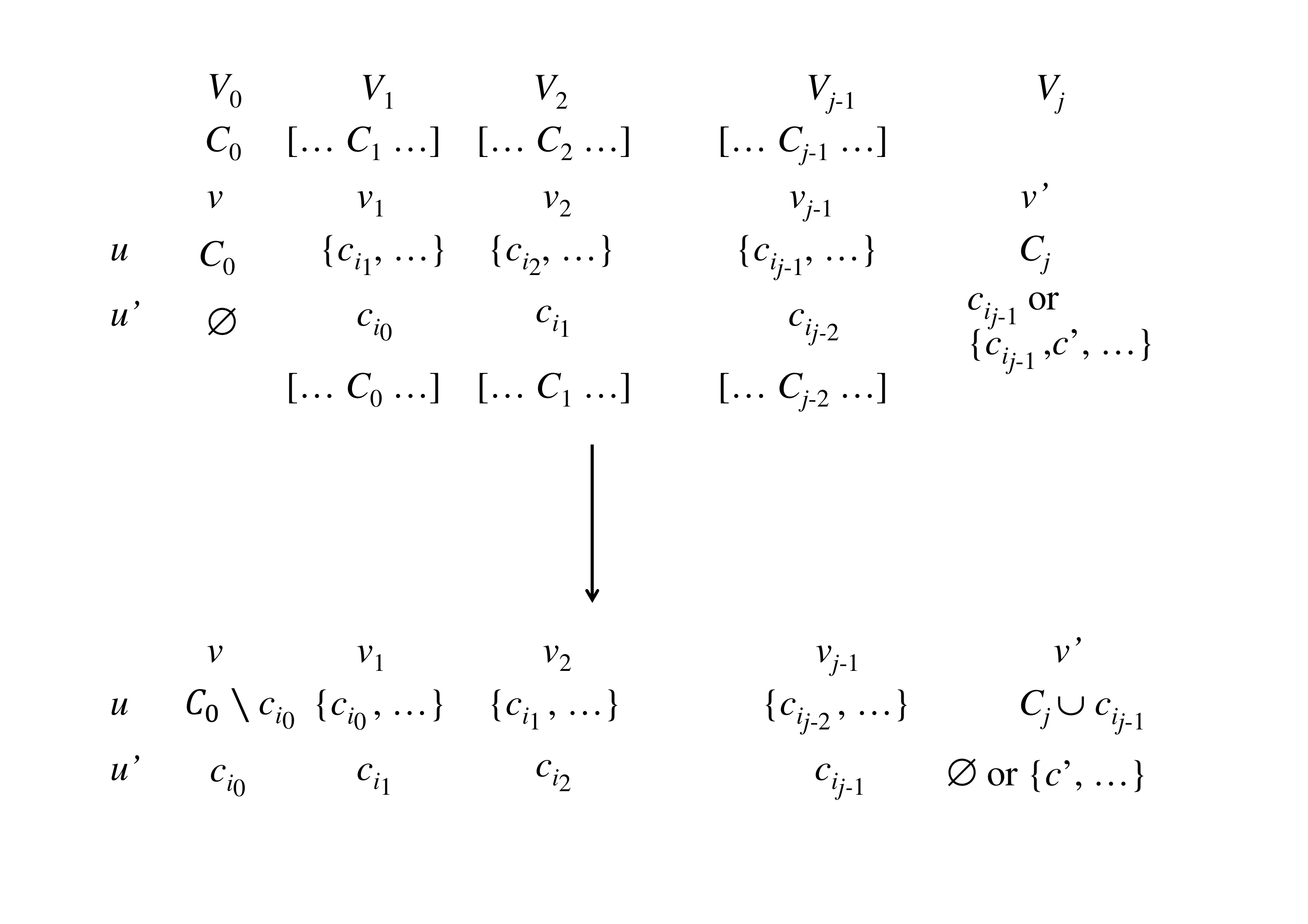}
\caption{The case when after $j$ iterations, either Case 1 or Case 2 holds. 
$[\ldots C_l \ldots]$ indicates that the (possibly multi)set of colors between $u$ and $V_l$ is $C_l$ 
and the (possibly multi)set of colors between $u'$ and $V_l$ is $C_{l-1}$, for all $l= 1, \ldots j-1$. From the set of vertices $V_l$, a vertex $v_l$ is selected such that there is one edge between $u'$ and $v_l$ with color $c_{i_{l-1}}$ and there is at least one edge between $u$ and $v_l$ including an edge with color $c_{i_{l}}$.
See text for details.}\label{fig:iteration}
\end{figure}

Since the exceed number is a non-negative finite integer, after a finite number of steps, the exceed number will be $0$, thus we get $K_{n,m}$ as an $\mathcal{M}$-factorization.
\end{proof}

It is clear that one of the colors might encode "non-edge", namely, we can delete those edges and get a realization of a half-regular degree matrix obtained by deleting one row from $\mathcal{M}$. Therefore, the following theorem also holds.

\begin{theorem}\label{theo:extended halfregular}
Let
\begin{eqnarray}
\mathcal{M} = \{\{d_{1,1} = d_{1,2} = \ldots = d_{1,n}\}, && \{f_{1,1}, f_{1,2}, \ldots f_{1,m}\} \nonumber\\
\{d_{2,1} = d_{2,2} = \ldots = d_{2,n}\}, && \{f_{2,1}, f_{2,2}, \ldots f_{2,m}\} \nonumber \\
\vdots & & \nonumber \\ 
\{d_{k,1} = d_{k,2} = \ldots = d_{k,n}\}, && \{f_{k,1}, f_{k,2}, \ldots f_{k,m}\}\}
\end{eqnarray}
be a half-regular bipartite degree matrix. Then $\mathcal{M}$ has a realization iff
\begin{enumerate}
\item $\forall i$, $n d_{i,1} = \sum_{j=1}^m f_{i,j}$
\item $\sum_{i=1}^k d_{i,1} \le m$
\item $\forall j$, $\sum_{i=1}^k f_{i,j} \le n$.
\end{enumerate}
\end{theorem}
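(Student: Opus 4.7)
The plan is to reduce Theorem~\ref{theo:extended halfregular} to Theorem~\ref{the:regular-factorization} by introducing a single extra ``slack'' color that encodes the non-edges of~$K_{n,m}$. The necessity direction mirrors the necessity proof of Theorem~\ref{the:regular-factorization}: condition~1 is the standard degree-sum identity for a bipartite graphical sequence, and conditions~2 and~3 follow because any realization of~$\mathcal{M}$ is a subgraph of~$K_{n,m}$, so each vertex in~$U$ has total (colored) degree at most~$m$ and each vertex in~$V$ has total degree at most~$n$.

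For sufficiency I would define an augmented matrix $\mathcal{M}'$ with one additional row:
\begin{equation*}
d_{k+1,j} := m - \sum_{i=1}^{k} d_{i,1} \quad (j=1,\dots,n), \qquad f_{k+1,l} := n - \sum_{i=1}^{k} f_{i,l} \quad (l=1,\dots,m).
\end{equation*}
Hypothesis~2 makes each $d_{k+1,j}$ non-negative, hypothesis~3 makes each $f_{k+1,l}$ non-negative, and the $d_{k+1,j}$ are independent of~$j$, so $\mathcal{M}'$ is still a half-regular bipartite degree matrix.

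Next I would verify that $\mathcal{M}'$ satisfies the three hypotheses of Theorem~\ref{the:regular-factorization}. Conditions~2 and~3 hold by construction, since the new row was chosen precisely to saturate the column sums at $m$ and~$n$ respectively. Condition~1 on the first $k$ rows is assumed; for the new row I would compute
\begin{equation*}
\sum_{l=1}^{m} f_{k+1,l} \;=\; nm - \sum_{i=1}^{k}\sum_{l=1}^{m} f_{i,l} \;=\; nm - n\sum_{i=1}^{k} d_{i,1} \;=\; n\, d_{k+1,1},
\end{equation*}
using condition~1 for the original rows. Theorem~\ref{the:regular-factorization} then produces an $\mathcal{M}'$-factorization of $K_{n,m}$, and deleting the edges of color $c_{k+1}$ yields an edge-colored bipartite graph realizing~$\mathcal{M}$.

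I do not anticipate a genuine obstacle here; the slack row is designed so that the augmented matrix falls exactly under the previous theorem, and all that remains is the three arithmetic identities above. The one conceptual point worth emphasising is that half-regularity is preserved by the augmentation, which is what allows us to invoke Theorem~\ref{the:regular-factorization} rather than needing a more general factorization result.
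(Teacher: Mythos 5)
Your proposal is correct and follows essentially the same route as the paper: augment $\mathcal{M}$ with a slack $(k+1)$st color encoding the non-edges, check that the extended matrix satisfies the hypotheses of Theorem~\ref{the:regular-factorization}, and delete the slack-colored edges from the resulting factorization. Your version is in fact slightly more complete, since you write out the degree-sum identity for the new row (which the paper only asserts) and your uniform treatment subsumes the paper's separate handling of the equality case.
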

\begin{proof}
It is clear that equality in condition 2 holds iff equality in condition 3 holds for all $j$. In case of equality, we get back Theorem~\ref{the:regular-factorization}. In case of inequality, let 
$$d_{k+1,1} = d_{k+1,2} = \ldots d_{k+1,n} = m - \sum_{i=1}^k d_{i,1}$$
and let
$$ f_{k+1,i} = n - \sum_{j=1}^k f_{j,i}\,\,\,\, \forall i = 1, 2, \ldots m.$$
Extend $\mathcal{M}$ with this $k+1$st row, and this matrix will satisfy the conditions of Theorem~\ref{the:regular-factorization}. Find a realization of this extended matrix, and delete the edges with the $k+1$st color, thus obtain a realization of $\mathcal{M}$.
\end{proof}

In the following section, we consider the solution space of realizations of half-regular degree matrices. Since there is no difference of realizations of half-regular degree matrices and $\mathcal{M}$-factorizations of the complete bipartite graph $K_{n,m}$, we will consider this later, namely, we consider non-edges as $k+1$st colors when non-edges exist.

\section{The connectivity problem}

In this section, we give necessary and sufficient perturbations to transform any realization of a half-regular bipartite degree matrix, $\mathcal{M}$, into another realization of $\mathcal{M}$. 
First, we extend the space of graphs on which perturbations are applied. We show how to transform a realization of $\mathcal{M}$ into another realization in this extended space, then
we show how to transform realizations into each other remaining in the space of realizations.
The concept is very similar to the concept applied in paper \cite{jm1996}, but different
perturbations are necessary to temporarily extend the space of graphs. First, we introduce the
necessary definition, the $(+c_1-c_2)$ deficiency. We will extend the space of graphs that have 
at most $3$ vertices with deficiency. We show how to transform $G_1$, a realization of 
$\mathcal{M}$ into another realization $G_2$, via graphs having at most $3$ vertices with 
deficiency. In doing so, we first arrange the colored edges of a vertex $u_0 \in U$ that they 
agree with realization $G_2$. Then we fix these edges, and reduce the problem to a similar, 
smaller problem. Note that $U$ is the regular class, and if we fix (technically: remove) the vertex $u_0$ together
with its edges, the remaining graph is still half-regular. Finally, we prove how to transform 
realizations of  $\mathcal{M}$ into each other remaining in the space of realizations of 
$\mathcal{M}$.

First, we define deficiency.

\begin{definition}
Let $\mathcal{M}$ be a bipartite degree matrix, and let $G$ be an edge colored bipartite graph. We say that a vertex $u_j$ in $G$ has a $(+c_{i_1} - c_{i_2})$-deficiency w.r.t. $\mathcal{M}$ if the number of its $c_{i_1}$ colored edges is $d_{i_1,j}+1$, the number of its $c_{i_2}$ colored edges is $d_{i_2,j}-1$ and for all other $i \neq i_1,i_2$, the number of $c_i$ colored edges of $u_j$ is $d_{i,j}$.
\end{definition}

Then we define an auxiliary graph that we use several times.

\begin{definition}\label{def:auxiliaryK}
Let $G(U,V,E)$ be an edge colored bipartite graph in which the edges are colored with colors $c_1, c_2, \ldots c_k$, and let $u$ and $u'$ be two vertices in $U$. Then the directed, edge labeled multigraph $K(G,u,u')$ is defined in the following way. The vertices of $K$ are the colors $c_1, c_2, \ldots c_k$, and for each $v \in V$, there is an edge going from $c_i$ to $c_j$, where $c_i$ is the color of the edge between $u$ and $v$ and $c_j$ is the color of the edge between $u'$ and $v$. Such edge is labeled with $v$.
\end{definition}

The next two lemmas show how to handle graphs with a small amount of deficiency.

\begin{lemma}\label{lem:broken-permutation}
Let $G$ be such an edge colored simple graph of $K_{n,m}$ which is almost a factorization of a half-regular bipartite degree matrix $\mathcal{M} = (D,F)$ in the following sense:
\begin{enumerate}
\item For each color $c_i$ and vertex $v_j$, the number of edges with color $c_i$ is $f_{i,j}$.
\item For each color $c_i$ and vertex $u_j$ the number of edges with color $c_i$ is $d_{i,j}$ except for two colors $c_{i_1}$ and $c_{i_2}$ and two vertices $u_{j_1}$ and $u_{j_2}$, where $u_{j_1}$ has a $(+c_{i_1} - c_{i_2})$-deficiency and $u_{j_2}$ has a $(+c_{i_2} - c_{i_1})$-deficiency. 
\end{enumerate}
Then there exists a perturbation of $G$ that affects only edges of $u_{j_1}$ and $u_{j_2}$ and transforms $G$ into a realization of $\mathcal{M}$.
\end{lemma}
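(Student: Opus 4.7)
The plan is to analyze the auxiliary directed multigraph $K := K(G,u_{j_1},u_{j_2})$ introduced in Definition~\ref{def:auxiliaryK}. Since $G$ sits inside $K_{n,m}$, every $v\in V$ contributes exactly one edge to $K$, running from the color of $u_{j_1}v$ to the color of $u_{j_2}v$ and labelled by $v$. Writing $d^+(c)$ and $d^-(c)$ for the out- and in-degrees of a color-vertex $c$ in $K$, we have $d^+(c_i) = \#\{c_i\text{-edges at }u_{j_1}\}$ and $d^-(c_i) = \#\{c_i\text{-edges at }u_{j_2}\}$. The half-regularity of $\mathcal{M}$ gives $d_{i,j_1}=d_{i,j_2}$ for every $i$, and combined with the deficiency hypothesis this makes $K$ ``almost Eulerian'': $d^+(c_{i_1})-d^-(c_{i_1})=+2$, $d^+(c_{i_2})-d^-(c_{i_2})=-2$, and every other color-vertex is balanced.

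Next, I would produce a directed path in $K$ from $c_{i_1}$ to $c_{i_2}$ by a standard reachability argument. Let $S$ be the set of color-vertices reachable from $c_{i_1}$ along directed edges of $K$. By maximality no edge of $K$ leaves $S$, so $\sum_{c\in S}\bigl(d^+(c)-d^-(c)\bigr)$ equals minus the number of edges entering $S$ from outside and is therefore $\le 0$. If $c_{i_2}\notin S$ the left-hand side would equal $+2$, a contradiction. Hence $c_{i_2}\in S$, and after pruning repeated vertices from a reachability walk we obtain a simple directed path $c_{i_1}=c_{a_0},c_{a_1},\ldots,c_{a_l}=c_{i_2}$ in $K$ with edge labels $v_1,\ldots,v_l \in V$.

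The perturbation is then transparent: for each $s=1,\ldots,l$ swap the colors of the two edges $u_{j_1}v_s$ and $u_{j_2}v_s$ in $G$. The underlying simple graph $K_{n,m}$ is not touched, and because only colors at a common endpoint $v_s$ are exchanged, every color count at every $v\in V$ is preserved, so condition~1 of the lemma still holds. A short telescoping count at $u_{j_1}$ shows its $c$-count decreases by $1$ when $c=c_{i_1}$, increases by $1$ when $c=c_{i_2}$, and is unchanged otherwise; the mirror computation at $u_{j_2}$ yields the opposite changes. These two corrections exactly cancel the two deficiencies, so the resulting graph is a realization of $\mathcal{M}$.

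The main obstacle is conceptual rather than computational: recognising that the right framework is the auxiliary graph $K(G,u_{j_1},u_{j_2})$, and that the deficiency pattern translates into an imbalance of $\pm 2$ concentrated at exactly two color-vertices. Once that reduction is in place, the existence of the alternating path is a one-line degree argument, the swap visibly preserves every quantity except those we wish to repair, and simplicity is automatic because only the coloring of the fixed underlying graph $K_{n,m}$ is changed.
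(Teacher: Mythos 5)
Your proposal is correct and follows essentially the same route as the paper: both build the auxiliary multigraph $K(G,u_{j_1},u_{j_2})$, observe the $\pm 2$ imbalance concentrated at $c_{i_1}$ and $c_{i_2}$ with all other color-vertices balanced, and repair the deficiencies by swapping the colors of $u_{j_1}v$ and $u_{j_2}v$ along a directed trail (in your case, a simple path) from $c_{i_1}$ to $c_{i_2}$. Your reachability-set argument for the existence of that path is a slightly more explicit justification than the paper's appeal to the pigeonhole principle, but it is the same idea.
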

\begin{proof}
Consider $K(G,u_{j_1},u_{j_2})$. For each vertex $c_i$, $i \neq i_1, i_2$ of the graph $K(G,u_{j_1},u_{j_2})$, the number of incoming and outgoing edges are the same, while $c_{i_1}$ has two more outgoing edges than incoming and $c_{i_2}$ has two more incoming edges than outgoing. Therefore, there is a trail from $c_{i_1}$ to $c_{i_2}$ due to the pigeonhole principle. For each edge $e_h$ labeled by $v_h$ along the trail, swap the corresponding edges in $G$, namely, the edge between $u_{j_1}$ and $v_h$ and the edge between $u_{j_2}$ and $v_h$. This transforms $G$ into a realization of $\mathcal{M}$. Indeed, $u_{j_1}$ will have one less edge with color $c_{i_1}$ and one more edge with color $c_{i_2}$ while the effect on $u_{j_2}$ is the opposite. The number of edges with other colors are not affected, since in the trail, the number of incoming and outgoing edges are the same for all colors not $c_{i_1}$ and not $c_{i_2}$, and swapping the edges in $G$ is equivalent with inverting the direction of the corresponding edges in $K(G,u_{j_1},u_{j_2})$.
\end{proof}

\begin{lemma}\label{lem:broken-permutation-ii}
Let $G$ be such an edge colored simple graph of $K_{n,m}$ which is almost a factorization of a half-regular bipartite degree matrix $\mathcal{M} = (D,F)$ in the following sense
\begin{enumerate}
\item For each color $c_i$ and vertex $v_j$, the number of edges with color $c_i$ is $f_{i,j}$.
\item For each color $c_i$ and vertex $u_j$, the number of edges with color $c_i$ is $d_{i,j}$ except for three colors $c_{i_1}, c_{i_2}$ and $c_{i_3}$ and three vertices $u_{j_1}$, $u_{j_2}$ and $u_{j_3}$, for which $u_{j_1}$ has $(+c_{i_1} - c_{i_2})$-deficiency, $u_{j_2}$ has $(+c_{i_2} - c_{i_3})$-deficiency and $u_{j_3}$ has $(+c_{i_3} - c_{i_1})$-deficiency.
\end{enumerate}
Then there exists a perturbation of $G$ that affects only edges of $u_{i_2}$ and $u_{i_3}$ and transforms $G$ into an edge colored simple graph of $K_{n,m}$ satisfying conditions 1 and 2 in Lemma~\ref{lem:broken-permutation}. 
\end{lemma}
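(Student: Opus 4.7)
The plan is to imitate the strategy of Lemma~\ref{lem:broken-permutation}, applied now to the auxiliary directed multigraph $K := K(G, u_{j_2}, u_{j_3})$. First I would compute, at each color-vertex $c_i$ of $K$, the quantity (out-degree) $-$ (in-degree). Since the out-degree of $c_i$ in $K$ equals the number of $c_i$-colored edges incident to $u_{j_2}$ and the in-degree equals the corresponding count at $u_{j_3}$, the specified deficiencies yield imbalance $+1$ at $c_{i_1}$ (where $u_{j_2}$ is correct and $u_{j_3}$ has one fewer $c_{i_1}$-edge), imbalance $+1$ at $c_{i_2}$ (where $u_{j_2}$ has one extra $c_{i_2}$-edge and $u_{j_3}$ is correct), imbalance $-2$ at $c_{i_3}$, and imbalance $0$ at every other color-vertex.

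Next I would extract from $K$ a trail $T$ that starts at $c_{i_2}$ and ends at $c_{i_3}$. Its existence follows from exactly the pigeonhole argument of Lemma~\ref{lem:broken-permutation}: start a walk at $c_{i_2}$ using an unused outgoing edge; at every interior color-vertex the remaining unused in-degree equals the remaining unused out-degree, so one can always leave along a fresh edge, and the only vertex at which the walk may be forced to stop is $c_{i_3}$. Equivalently, the standard decomposition of a directed multigraph into edge-disjoint trails and circuits yields exactly two trails here, one from $c_{i_1}$ to $c_{i_3}$ and one from $c_{i_2}$ to $c_{i_3}$, so such a $T$ certainly exists.

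Having $T$ in hand, for each edge of $T$ labeled by a vertex $v$ I would swap the colors of the two edges $u_{j_2}v$ and $u_{j_3}v$ in $G$. Exactly as in Lemma~\ref{lem:broken-permutation}, this swap preserves the color-degrees at every $v \in V$ (condition~1 is maintained), and at the interior color-vertices of $T$ the incoming and outgoing contributions cancel, so the color-degrees of $u_{j_2}$ and $u_{j_3}$ are unaffected for every color different from $c_{i_2}$ and $c_{i_3}$. The first edge of $T$ forces $u_{j_2}$ to lose one $c_{i_2}$-edge and $u_{j_3}$ to gain one $c_{i_2}$-edge; the last edge forces $u_{j_2}$ to gain one $c_{i_3}$-edge and $u_{j_3}$ to lose one $c_{i_3}$-edge. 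Consequently $u_{j_2}$ becomes correct in every color, while the $(+c_{i_3} - c_{i_1})$ deficiency at $u_{j_3}$ becomes a $(+c_{i_2} - c_{i_1})$ deficiency. The resulting graph $G'$ thus satisfies conditions~1 and~2 of Lemma~\ref{lem:broken-permutation}, with the two complementary deficiencies located at $u_{j_1}$ and $u_{j_3}$.

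The main obstacle I anticipate is mostly bookkeeping: $K$ now has two distinct sources of positive excess rather than one, so one must be careful to choose a trail that departs from $c_{i_2}$ rather than $c_{i_1}$, and to argue that the intermediate swaps really cancel (which uses that each $v \in V$ labels a unique edge of $K$, so each swap touches a different $v$ and is well-defined inside the simple graph $K_{n,m}$). Since $c_{i_3}$ is the unique sink of $K$, any maximal unused-edge walk started at $c_{i_2}$ must terminate at $c_{i_3}$, so the existence of $T$ is not a genuine obstacle and the pigeonhole argument of Lemma~\ref{lem:broken-permutation} carries over essentially verbatim.
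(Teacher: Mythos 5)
Your proof is correct and follows essentially the same route as the paper: build the auxiliary multigraph on the pair $u_{j_2},u_{j_3}$, locate the color-vertices with out/in imbalance, extract a trail between them by the pigeonhole argument, and swap the two vertices' edge colors along it. The only (immaterial) difference is that the paper works with $K(G,u_{j_3},u_{j_2})$ and routes the trail from $c_{i_3}$ to $c_{i_1}$, cancelling the deficiency of $u_{j_3}$ and leaving $u_{j_2}$ with the residual $(+c_{i_2}-c_{i_1})$-deficiency, whereas you cancel the deficiency of $u_{j_2}$ and leave the residual $(+c_{i_2}-c_{i_1})$-deficiency on $u_{j_3}$; either outcome satisfies conditions 1 and 2 of Lemma~\ref{lem:broken-permutation}.
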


\begin{proof}
Consider the graph $K(G,u_{j_3},u_{j_2})$.
For each vertex $c_i$, $i \neq i_1, i_2, i_3$ of the graph $K(G,u_{j_3},u_{j_2})$, the number of incoming and outgoing edges are the same, while $c_{i_3}$ has two more outgoing edges than incoming and $c_{i_1}$ and $c_{i_2}$ has one-one more incoming edges than outgoing. Therefore there is a trail from vertex $c_{i_3}$ to $c_{i_2}$ (and also to $c_{i_1}$) due to the pigeonhole principle.
Take a trail from $c_{i_3}$ to $c_{i_1}$, and for each edge along the trail, swap the corresponding edges in $G$, namely, the edge between $u_{j_2}$ and $v_h$ and the edge between $u_{j_3}$ and $v_h$. This transforms $G$ into a graph satisfying conditions 1 and 2 in Lemma~\ref{lem:broken-permutation}. Indeed, $u_3$ will have one less edge with color $c_{i_3}$ and one more edge with color $c_{i_1}$, namely, the transformation cancels its deficiency. Vertex $u_2$ will have one more edge with color $c_{i_3}$ and one less edge with color $c_{i_1}$, therefore its $(+c_{i_2}-c_{i_3})$ deficiency becomes a $(+c_{i_2}-c_{i_1})$ deficiency. The number of edges with other colors are not affected, since in the trail, the number of incoming and outgoing edges are the same for all colors not $c_{i_1}$ and not $c_{i_3}$, and swapping the edges in $G$ is equivalent with inverting the direction of the corresponding edges in $K(G,u_{j_3},u_{j_2})$.
\end{proof}


The following lemma is the key lemma in transforming a realization into another realization. As we
mentioned above, the strategy is to transform a realization $G_1$ into an intermediate realization
$H$, such that the colors of edges of a vertex $u_0$ in the regular class agrees with the colors
of the edges of $u_0$ in the target realization $G_2$. We have to permute the edges, and
the basic ingredient of a permutation is a cyclic permutation. The following lemma shows how to
perturb a realization along a cyclic permutation.

\begin{lemma}\label{lem:single row perturbations}
Let $G_1$ and $G_2$ be two realizations of the same half-regular bipartite degree matrix $\mathcal{M}$. Let $V'$ be a subset of vertices such that for some $u \in U$, each possible colors appears at most once on the edges between $u$ and $V'$ in $G_1$, furthermore, there exists a cyclic permutation $\pi$ on $V'$ such that for all $v \in V'$, the color between $u$ and $v$ in $G_1$ is the color between $u$ and $\pi(v)$ in $G_2$. Then there exists a sequence of colored graphs $G_1 = H_0, H_1, \ldots H_l$ with the following properties
\begin{enumerate}
\item For all $i = 1, \ldots l-1$, $H_i$ is a colored graph satisfying either the properties 1 and 2 in Lemma~\ref{lem:broken-permutation}  or the properties 1 and 2 in Lemma~\ref{lem:broken-permutation-ii}.
\item For all $i = 0, \ldots l-1$, a perturbation exists that transforms $H_i$ into $H_{i+1}$ and perturbs only the edges of two vertices in $U$.
\item $H_l$ is a realization of $\mathcal{M}$ such that for all $v' \in V'$, the color of the edge between $u$ and $v'$ is the color between $u$ and $v'$ in $G_2$, and for all $v \in V \setminus V'$, the color between $u$ and $v$ is the color between $u$ and $v$ in $G_1$.
\end{enumerate} 
\end{lemma}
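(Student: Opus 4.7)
Index $V' = \{v_0, v_1, \ldots, v_{k-1}\}$ so that $\pi(v_i) = v_{i+1}$ (indices mod $k$), and let $c_i$ denote the color of $(u,v_i)$ in $G_1$; by hypothesis these $k$ colors are pairwise distinct, and the target color of $(u,v_i)$ in $H_l$ is $c_{i-1}$. If $k=1$ there is nothing to do, so assume $k \geq 2$. The plan is to walk once around the cycle, correcting $(u,v_0), (u,v_1), \ldots, (u,v_{k-1})$ in turn, each correction being a single swap of colors between $u$ and some auxiliary vertex $u' \in U$ at the edge under consideration. Such a swap preserves every color-degree on $V$, and the required auxiliary vertex exists at each step because the target color is witnessed at the current $V$-vertex by $G_2$ and hence (by preservation of color-degrees) by the current graph as well.

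Initialization. Since $(u,v_0)$ has color $c_{k-1}$ in $G_2$, the $c_{k-1}$-degree of $v_0$ is at least $1$, and this $c_{k-1}$-degree agrees with that in $G_1$; since $(u,v_0)$ has color $c_0 \neq c_{k-1}$ in $G_1$, there is $u_1 \in U \setminus \{u\}$ with $(u_1, v_0)$ colored $c_{k-1}$ in $G_1$. Swap the colors of $(u,v_0)$ and $(u_1,v_0)$ to form $H_1$. Then $(u,v_0)$ carries its target color, $u$ has a $(+c_{k-1}-c_0)$-deficiency, and $u_1$ has the complementary $(+c_0-c_{k-1})$-deficiency, so $H_1$ satisfies the hypotheses of Lemma~\ref{lem:broken-permutation}.

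Inductive step. Maintain the invariant that the current graph satisfies Lemma~\ref{lem:broken-permutation} with $u$ carrying a $(+c_{k-1}-c_{i-1})$-deficiency and some $u_i \in U \setminus \{u\}$ carrying the complementary deficiency, with $(u,v_0),\ldots,(u,v_{i-1})$ already at their target colors and all edges of $u$ to $V \setminus V'$ unchanged from $G_1$. To process $v_i$ (for $i = 1,\ldots,k-2$), the degree argument from the initialization again yields $\tilde u \in U\setminus\{u\}$ with $(\tilde u,v_i)$ colored $c_{i-1}$ in the current graph; swap the colors of $(u,v_i)$ and $(\tilde u,v_i)$. If $\tilde u = u_i$, both deficiencies rotate simultaneously into $(+c_{k-1}-c_i)$ and $(+c_i-c_{k-1})$, restoring the invariant with $u_{i+1} := u_i$. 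Otherwise the resulting graph has three deficient vertices, $u$ with $(+c_{k-1}-c_i)$, $\tilde u$ with $(+c_i-c_{i-1})$, and $u_i$ with $(+c_{i-1}-c_{k-1})$, which matches Lemma~\ref{lem:broken-permutation-ii} under the assignment $u_{j_1}:=u$, $u_{j_2}:=\tilde u$, $u_{j_3}:=u_i$; applying that lemma's perturbation, which touches only $u_{j_2}$ and $u_{j_3}$, restores the 2-vertex form with $u_{i+1}:=\tilde u$ while leaving every edge of $u$ untouched.

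Termination. At $i=k-1$, choose $\tilde u$ with $(\tilde u,v_{k-1})$ colored $c_{k-2}$ and swap with $(u,v_{k-1})$: this causes $u$ to lose its excess $c_{k-1}$ and gain its missing $c_{k-2}$, so $u$ becomes balanced. If $\tilde u = u_{k-1}$ the complementary deficiency clears in the same swap; otherwise the residue is complementary $(+c_{k-1}-c_{k-2})$- and $(+c_{k-2}-c_{k-1})$-deficiencies at $\tilde u$ and $u_{k-1}$, which Lemma~\ref{lem:broken-permutation} resolves without involving $u$. The resulting $H_l$ is a realization of $\mathcal{M}$ in which every $(u,v)$ with $v\in V'$ has its $G_2$-color and every $(u,v)$ with $v\in V\setminus V'$ is unchanged from $G_1$, so conditions 1, 2, and 3 of the lemma all hold. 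The main difficulty is precisely the case $\tilde u \neq u_i$: this forces the use of the 3-vertex deficient intermediate states of Lemma~\ref{lem:broken-permutation-ii}, and the scheme works only because in that configuration $u$ can always be cast as $u_{j_1}$, ensuring that the cleanup perturbation acts on two auxiliary vertices and never disturbs the already-corrected edges of $u$.
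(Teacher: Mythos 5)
Your proof is correct and follows essentially the same strategy as the paper's: correct the edges $(u,v_0),\dots,(u,v_{k-1})$ one at a time by a single swap with an auxiliary vertex, maintain the invariant that $u$ carries a $(+c_{k-1}-c_{i-1})$-deficiency complemented by one auxiliary vertex, invoke Lemma~\ref{lem:broken-permutation-ii} (acting only on the two auxiliary vertices) whenever a new auxiliary vertex creates a three-vertex deficiency cycle, and finish with Lemma~\ref{lem:broken-permutation}. The only differences are notational, and you additionally make explicit the existence argument for the auxiliary vertex at each step, which the paper leaves implicit.
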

\begin{proof}
Let $(v_{i_1}, v_{i_2}, \ldots v_{i_r})$ denote the cyclic permutation, and for all $l = 1, 2, \ldots r$, let $c_{j_l}$ be the 
color of the edge between $u$ and $v_{i_l}$ in $G_1$. 
Since $G_2$ is a realization of $\mathcal{M}$, there is a $u'$ such that the color between $u'$
and $v_{i_1}$ is $c_{j_r}$. Indeed, the color between $u$ and $v_{i_1}$ in $G_2$ is $c_{j_r}$ 
(the permutation $\pi$ moves $v_{i_r}$ to $v_{i_1}$), thus, $v_{i_1}$ has an edge with color $c_{j_r}$.
Swap the edges between $u$ and $v_{i_1}$ and between $u'$ and $v_{i_1}$. This will be $H_1$, which satisfies the conditions $1$ and $2$ in Lemma~\ref{lem:broken-permutation}. Indeed, $u$ has a $(+c_{j_r}-c_{j_1})$ deficiency, while $u'$ has a $(+c_{j_1}-c_{j_r})$ deficiency. Clearly, 
$H_0$ and $H_1$ differ only on edges of $u$ and $u'$, furthermore, all edges of $u$ has the 
same color than in $G_1$ except the edge between $u$ and $v_{i_1}$.

Assume that some $H_t$ is achieved for which conditions $1$ and $2$ in Lemma~\ref{lem:broken-permutation} are satisfied with $u$ having $(+c_{j_{r}}-c_{j_s})$ deficiency and with some $u'$ 
having $(+c_{j_s}-c_{j_{r}})$ deficiency and for all vertices $v_{i_1}, v_{i_2}, \ldots v_{i_{s}}$, the edges between $u$ and these vertices have a color as in $G_2$. Furthermore, all other edges between $u$ and $v \ne v_{i_1}, v_{i_2}, \ldots v_{i_s}$ did not change color.
 The color between $u$ and $v_{i_{s+1}}$ in $G_2$ is $c_{j_s}$, therefore, there is a $u"$ such that the color between $u"$
and $v_{i_{s+1}}$ is $c_{j_s}$  in $H_t$. Swap the edges between $u$ and $v_{i_{s+1}}$ and
between $u"$ and $v_{i_{s+1}}$. This will be the graph $H_{t+1}$. Clearly, $H_t$ and 
$H_{t+1}$ differ only on edges of $u$ and $u"$. If $u' = u"$, then $u$ has
$(+c_{j_r}-c_{j_{s+1}})$ deficiency, and $u'$ has $(+c_{j_{s+1}}-c_{j_r})$ deficiency, hence
$H_{t+1}$ satisfies conditions $1$ and $2$ in Lemma~\ref{lem:broken-permutation}. We can rename $H_{t+1}$ to $H_t$ and iterate the chain of transformations.

If $u' \ne u"$, then $u$ has $(+c_{j_r}-c_{j_{s+1}})$ deficiency, $u"$ has 
$(+c_{j_{s+1}}-c_{j_s})$ deficiency and $u'$ has $(+c_{j_s}-c_{j_r})$ deficiency. Thus, the 
conditions $1$ and $2$ in Lemma~\ref{lem:broken-permutation-ii} hold. Due to 
Lemma~\ref{lem:broken-permutation-ii}, there exists a perturbation that affects only edges on
$u'$ and $u"$ and transforms $H_{t+1}$ to an $H_{t+2}$ for which conditions $1$ and $2$ in
Lemma~\ref{lem:broken-permutation} hold with $u$ having $(+c_{j_r}-c_{j_{s+1}})$ deficiency
and $u"$ having $(+c_{j_{s+1}}-c_{j_r})$ deficiency. We can rename $H_{t+2}$ to $H_t$ and iterate the chain of transformations.

With this series of transformations, we can reach $H_t$ wich satisfies conditions $1$ and $2$
in Lemma~\ref{lem:broken-permutation} with $u$ having $(+c_{j_r}-c_{j_{r-1}})$ deficiency
and with some $u'$ having $(+c_{j_{r-1}}-c_{j_r})$ deficiency, furthermore, all vertices 
$v_{i_1}, v_{i_2}, \ldots v_{i_{r-1}}$, the edges between $u$ and these vertices have a
color as in $G_2$, while all other edges of $u$ did not change color.
There is a $u"$ such that the color of the edge between $u"$ and $v_{i_r}$ is $c_{j_{r-1}}$. $H_t$ is transformed into $H_{t+1}$ by swapping the edges between $u$ and $v_{i_r}$ and
between $u"$ and $v_{j_r}$. If $u' = u"$, then this transformation leads to a realization of $\mathcal{M}$, and we are ready, namely, $H_{t+1} = H_l$. Otherwise, $H_{t+1}$ satisfies the
conditions $1$ and $2$ in Lemma~\ref{lem:broken-permutation} with $u'$ having 
$(+c_{j_{r-1}}-c_{j_r})$ deficiency and $u"$ having $(+c_{j_r}-c_{j_{r-1}})$ deficiency. 
According to Lemma~\ref{lem:broken-permutation}, $H_{t+1}$ can be transformed into $H_l$
with a perturbation affecting only edges on $u'$ and $u"$.
\end{proof}

Since $H_l$ is also a realization of $\mathcal{M}$, it is desirable to have a series of transformation 
from $G_1$ to $H_l$ such that all the intermediate graphs are realizations of $\mathcal{M}$. To do this, we need a slightly larger perturbation modifying the edges of three vertices in the regular
vertex set, as stated in the following lemma.
\begin{lemma}\label{lem:one cycle proper perturbations}
Let $G_1$, $G_2$, $V'$ and $\pi$ the same as in Lemma~\ref{lem:single row perturbations}. Then there exists a sequence of colored graphs $G_1 = H'_0, H'_1, \ldots H'_{l'}$ with the following properties
\begin{enumerate}
\item For all $i = 1, \ldots l'$, $H'_i$ is a realization of $\mathcal{M}$.
\item For all $i = 0, \ldots l'-1$, a perturbation exists that transforms $H'_i$ into $H'_{i+1}$ and perturbs only the edges of at most three vertices in $U$.
\item $H'_{l'}$ is a realization of $\mathcal{M}$ such that for all $v' \in V'$, the color of the edge between $u$ and $v'$ is the color between $u$ and $v'$ in $G_2$, and for all $v \in V \setminus V'$, the color between $u$ and $v$ is the color between $u$ and $v$ in $G_1$.
\end{enumerate} 
\end{lemma}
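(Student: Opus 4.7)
The plan is to realize the cyclic permutation $\pi$ on $u$'s $V'$-edges as a product of $r-1$ transpositions and implement each transposition as one realization-to-realization macro-move touching at most three vertices in $U$. Concretely, I would decompose $\pi=\tau_{r-1}\tau_{r-2}\cdots\tau_1$ into transpositions on $V'$ and build the sequence $G_1=H'_0,H'_1,\dots,H'_{r-1}$ so that the $u$-coloring on $V'$ in $H'_k$ is obtained from $G_1$'s by applying $\tau_1,\dots,\tau_k$, while $u$'s edges to $V\setminus V'$ remain fixed throughout; the last realization $H'_{r-1}$ then agrees with $G_2$ on $V'$ and with $G_1$ on $V\setminus V'$, which is exactly what condition~3 of the lemma requires.

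For each macro-step $H'_{k-1}\to H'_k$ realizing $\tau_k=(v_\alpha,v_\beta)$ that swaps $u$'s current colors $c$ at $v_\alpha$ and $c'$ at $v_\beta$, my plan is as follows. First, find $u'\in U\setminus\{u\}$ carrying color $c'$ on its edge to $v_\alpha$ in $H'_{k-1}$ and swap the edges $(u,v_\alpha)$ and $(u',v_\alpha)$; then find $u''\in U\setminus\{u\}$ carrying color $c$ on its edge to $v_\beta$ in the intermediate graph and swap $(u,v_\beta)$ and $(u'',v_\beta)$. A direct bookkeeping check shows that $u$'s color multiset is preserved (it has gained and then lost one copy each of $c$ and $c'$), whereas $u'$ picks up a $(+c-c')$-deficiency and $u''$ a $(+c'-c)$-deficiency. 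If $u'=u''$ these deficiencies cancel exactly and $H'_k$ is already a realization; otherwise I invoke Lemma~\ref{lem:broken-permutation} on the pair $u',u''$, which yields a further perturbation restricted to their own edges that restores the graph to a realization of $\mathcal{M}$. Either way the total macro-move perturbs only the vertices $u$, $u'$, and $u''$ in $U$, and $u$'s edges outside $\{v_\alpha,v_\beta\}$ are untouched.

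The hard part is the existence of $u'$ and $u''$ for each transposition in the decomposition, which amounts to the feasibility conditions $f_{c',\alpha}\ge1$ and $f_{c,\beta}\ge1$ holding in the current realization. The first transposition is automatically safe because its colors come from $G_1$ and $G_2$ (both realizations), but a naive adjacent decomposition of $\pi$ could transiently place a color $c_{j_r}$ at an intermediate position $v_{i_l}$ where $f_{c_{j_r},i_l}$ might be zero. To avoid this, I would choose the decomposition adaptively so that the $u$-color at each $v_{i_l}$ always stays inside the set $\{c_{j_l},c_{j_{l-1}}\}$, both of which are guaranteed to have $f\ge1$ at $v_{i_l}$ because $G_1$ and $G_2$ are realizations of $\mathcal{M}$. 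Verifying that $\pi$ indeed admits such a restricted decomposition is the delicate combinatorial step; if it fails at some stage, I would fall back on the sequence of Lemma~\ref{lem:single row perturbations} and argue that an appropriate grouping of its micro-moves (combined with a closing Lemma~\ref{lem:broken-permutation} application) still yields a realization-preserving macro-move on at most three $U$-vertices.
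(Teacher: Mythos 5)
Your main plan has a genuine gap at exactly the point you flag as ``delicate'': the restricted transposition decomposition you hope for does not exist once the cycle has length $r\ge 3$, and in fact even the first transposition is not automatically safe. Write $c_{j_l}$ for the color of $(u,v_{i_l})$ in $G_1$, so that the target color at $v_{i_l}$ is $c_{j_{l-1}}$. Since all realizations of $\mathcal{M}$ have identical degrees on $V$, a helper vertex carrying color $c$ at $v_{i_l}$ exists only if the prescribed degree of $c$ at $v_{i_l}$ is positive, and the only colors guaranteed to have positive prescribed degree there are $c_{j_l}$ and $c_{j_{l-1}}$ (witnessed by $G_1$ and $G_2$ respectively). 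Now suppose the first transposition of your decomposition swaps positions $\alpha$ and $\beta$. Position $\alpha$ receives $c_{j_\beta}$, which is guaranteed feasible only if $c_{j_\beta}=c_{j_{\alpha-1}}$, i.e.\ $\beta=\alpha-1$; symmetrically, position $\beta$ receives $c_{j_\alpha}$, forcing $\alpha=\beta-1$. For $r\ge3$ these contradict each other, so \emph{every} transposition decomposition transiently parks some color at a vertex where $\mathcal{M}$ may prescribe degree $0$ for it, and the required $u'$ or $u''$ may simply not exist. (For Latin squares all $f_{i,j}=1$ and the obstruction vanishes, which may be why the plan looks plausible.)

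Your fallback is essentially the paper's actual proof, but it is left unexecuted. The paper takes the micro-sequence $H_0,\dots,H_l$ of Lemma~\ref{lem:single row perturbations} --- which sidesteps the feasibility issue entirely, because each micro-step places at $v_{i_{s+1}}$ directly its final $G_2$-color, whose presence at $v_{i_{s+1}}$ is witnessed by $G_2$ itself, and pushes all the bookkeeping into deficiencies of helper vertices resolved by trails in the auxiliary graph $K$ --- and then groups the micro-steps between consecutive three-deficiency events, closing each group by an application of Lemma~\ref{lem:broken-permutation} to the two vertices that remain deficient. The nontrivial points to verify there are that within each group the deficient pair stays the same (so the accumulated change sits on at most two, and after the next three-deficiency event at most three, vertices of $U$), and that the closing perturbation, although it modifies $u$'s edges, still leaves consecutive $H'_j$'s differing on at most three vertices of $U$ and terminates at $H_l$. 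None of this is established by your sketch, so as written the proof is incomplete.
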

\begin{proof}
Consider the series of colored graphs $H_0, H_1, H_2, \ldots H_l$ obtained in 
Lemma~\ref{lem:single row perturbations}. Find the largest $t_1$ satisfying
$\forall 0 < t' < t_1$, $H_{t'}$ has two vertices with a deficiency. Due to the construction, for all $t'$, these two vertices are the same, and $G_1$ and $H_{t'}$ differ only in edges of these two vertices, $u$ and $u'$. 
If $t_1 = l$, then $G_1$ and $H_l$ differ only on edges of two vertices of $U$, thus $G_1 = H'_0, H'_1 = H_l$ will suffice. Otherwise,
$H_{t_1}$ has deficiency on $3$ vertices, $u$, $u'$ and $u"$.
 $H_{t_1+1}$ has deficiency on $2$ vertices, $u$ and $u"$, and $G_1$ differ from $H_{t_1+1}$ on edges of 
three vertices, $u$, $u'$ and $u"$. Apply Lemma~\ref{lem:broken-permutation} on $H_{t_1+1}$, 
the so-obtained graph will be $H'_1$. $H'_1$ is a realization of $\mathcal{M}$ and differ from $G_1$ only on 
edges of 3 vertices in $U$. 

Iterate this construction, find the largest $t_j$ such that $\forall t_{j-1} < t' < t_j$, $H_{t'}$ has two vertices with deficiency. If $t_j = l$, then $l' = j$, and $H'_{l'} = H_l$. Otherwise, $H_{t_j}$ has 3 vertices with deficiency, and differ from $H'_{j-1}$ only on the edges of these $3$ vertices. $H_{t_j+1}$ has two vertices with deficiency, on which Lemma~\ref{lem:broken-permutation} is
applied to get $H'_j$. $H'_j$ differs from $H'_{j-1}$ only on edges of $3$ vertices, since $H'_{j-1}$ differs from $H_{t_{j-1}+1}$ on the edges of the same 2 vertices that have deficiency for all $H_{t'}$, $t_{j-1} < t' < t_j$. 

After a finite number of iterations, $t_j = l$, and then $j = l'$, $H'_{l'} = H_l$, and this finishes the construction.
\end{proof}

\begin{theorem}\label{theo:full perturbations}
Let $G_1$ and $G_2$ be realizations of the same half-regular bipartite degree matrix $\mathcal{M}$. Then there exists a sequence of colored graphs $G_1 = H_0, H_1, \ldots H_l = G_2$ with the following properties.
\begin{enumerate}
\item For all $i = 0, \ldots l$, $H_i$ is a realization of $\mathcal{M}$.
\item For all $i = 0, \ldots l-1$, a perturbation exists that transforms $H_i$ into $H_{i+1}$ and perturbs only the edges of at most three vertices in $U$.
\end{enumerate}
\end{theorem}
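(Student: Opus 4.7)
The plan is to induct on $n = |U|$. The base case $n \le 1$ is trivial because the realization of a half-regular $\mathcal{M}$ is uniquely determined. For the inductive step I pick a vertex $u_0 \in U$ and first transform $G_1$ into an intermediate realization $H^{*}$ of $\mathcal{M}$ whose edges at $u_0$ coincide with those of $G_2$; then I reduce to a smaller instance obtained by removing $u_0$.

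To match the edges at $u_0$, I build the directed multigraph $\Delta$ on the color set $\{c_1, \ldots, c_k\}$ as follows: for each $v \in V$ whose color $a_v$ on $u_0 v$ in $G_1$ differs from its color $b_v$ in $G_2$, insert a directed edge from $a_v$ to $b_v$ labelled by $v$. Because $G_1$ and $G_2$ realize the same half-regular matrix, the multisets $\{a_v\}_{v\in V}$ and $\{b_v\}_{v\in V}$ coincide, so every vertex of $\Delta$ has equal in- and out-degree. Hence $\Delta$ decomposes into edge-disjoint simple directed cycles $C_1, \ldots, C_s$. A simple cycle $C_t$, say $c_{j_1} \to c_{j_2} \to \cdots \to c_{j_{r_t}} \to c_{j_1}$ with edge labels $v_1^t, \ldots, v_{r_t}^t$, determines the label set $V_t' = \{v_1^t, \ldots, v_{r_t}^t\}$ and the cyclic permutation $\pi_t(v_l^t) = v_{l-1}^t$ (indices mod $r_t$); by construction $a_{v_l^t} = c_{j_l} = b_{v_{l-1}^t}$, so $\pi_t$ meets the hypothesis of Lemma~\ref{lem:one cycle proper perturbations}. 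Since $C_t$ is a simple cycle, the colors $c_{j_1}, \ldots, c_{j_{r_t}}$ are distinct, so each color appears at most once on the edges from $u_0$ to $V_t'$; and the label sets $V_1', \ldots, V_s'$ are pairwise disjoint because the cycles are edge-disjoint in $\Delta$.

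I then process the cycles one at a time, starting from $G_1$. At stage $t$, Lemma~\ref{lem:one cycle proper perturbations} produces a sequence of realizations of $\mathcal{M}$ ending in a graph whose edges at $u_0$ incident to $V_t'$ agree with $G_2$ and whose remaining edges at $u_0$ are unchanged. Because the $V_t'$ are pairwise disjoint, previous stages never altered edges from $u_0$ to $V_t'$, so the hypotheses of the lemma still hold at stage $t$. After stage $s$ the current graph $H^{*}$ is a realization of $\mathcal{M}$ whose edges at $u_0$ match those of $G_2$ exactly, and every transformation in this phase perturbs the edges of at most three vertices of $U$.

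For the inductive reduction, delete $u_0$: the graphs $H^{*}$ and $G_2$ without $u_0$ both realize the half-regular matrix $\mathcal{M}'$ obtained from $\mathcal{M}$ by dropping the column of $D$ corresponding to $u_0$ and subtracting the (identical) contribution of $u_0$'s edges from each entry of $F$. By the induction hypothesis there is a sequence of realizations of $\mathcal{M}'$ connecting them via perturbations of at most three vertices in $U \setminus \{u_0\}$; re-inserting the common edges at $u_0$ in every term lifts this to a sequence of realizations of $\mathcal{M}$ from $H^{*}$ to $G_2$ with the same bound on perturbed vertices, and concatenation finishes the construction. The main obstacle is justifying the decomposition step: one must verify that $\Delta$ really decomposes into simple cycles whose label sets are disjoint and whose vertex colors are distinct, so that the hypotheses of Lemma~\ref{lem:one cycle proper perturbations} hold at every stage and the sequential processing does not unravel earlier progress.
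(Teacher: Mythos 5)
Your proposal is correct and follows essentially the same route as the paper: fix a vertex of the regular class, form the auxiliary color multigraph comparing its edge colors in the two realizations, decompose it into edge-disjoint simple directed cycles, apply Lemma~\ref{lem:one cycle proper perturbations} to each cycle in turn, and then delete the vertex and recurse on the smaller half-regular instance. Your added care about the cycles being simple with pairwise disjoint label sets (so the lemma's hypotheses persist across stages) is exactly the point the paper handles by citing condition~3 of Lemma~\ref{lem:single row perturbations}.
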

\begin{proof}
Consider a vertex $u$ in the regular vertex class $U$, and define the following directed multigraph
$K$. The vertices of $K$ are the colors $c_1, c_2, \ldots c_k$ and the edges are defined by the 
following way. For each vertex $v \in V$, there is an edge going from $c_i$ to $c_j$, where $c_i$ is the color of the edge between $u$ and $v$ in $G_2$ and $c_j$ is the color of the edge 
between $u$ and $v$ in $G_1$ (if $c_i = c_j$, then this edge will be a loop). Label this edge with vertex $v$.
Since $G_1$ and $G_2$ are realizations of the same degree matrix $\mathcal{M}$, $K$ is Eulerian and thus, can be decomposed into directed cycles. If all cycles are trivial (loops), then for 
all vertex $v \in V$, the color of the edge between $u$ and $v$ in $G_1$ and $G_2$ are the same. If there is a nontrivial cycle $C$, then each color appears in this cycle at most once, and the 
edges of a cycle define a cyclic permutation on a
subset of vertices $V$. Let $(v_{i_1}, v_{i_2}, \ldots v_{i_r})$ denote this cyclic permutation 
$\pi$. By the definition of $K$, the color between $u$ and $v$ in $G_1$ is the color between
$u$ and $\pi(v)$ in $G_2$, for all $v = v_{i_1}, v_{i_2}, \ldots v_{i_r}$. Therefore, we can apply
Lemma~\ref{lem:one cycle proper perturbations} to transform $G_1$ into $H'_{l'}$. Now if we construct the 
same directed multigraph $K$ considering $H'_{l'}$, $G_2$ and the same vertex $u$, then cycle $C$
becomes separated loops for all of its vertices, while other edges are not affected (recall the second half of condition 3 in Lemma~\ref{lem:single row perturbations}, "\emph{for all $v \in V \setminus V'$, the color between $u$ and $v$ is the color between $u$ and $v$ in $G_1$.}"). While there are non-trivial cycles in $K$, we can apply Lemma~\ref{lem:one cycle proper perturbations}, and in a finite
number of steps, $G_1$ is transformed into a realization $H'$ such that for all vertices $v \in V$, the color of the edge between $u$ and $v$ in $H'$ and $G_2$ are the same. Furthermore, in all
steps along the transformation, the intermediate graphs satisfy the two conditions of the lemma.

Fix the edges of $u$ both in $H'$ and $G_2$. Technically, this can be done by deleting vertex $u$ and its corresponding edges both from $H'$ and $G_2$. Then these reduced graphs will be realizations
of the same half-regular degree matrix that can be obtained from $\mathcal{M} = (D,F)$ by deleting a column from the row-regular $D$ (which thus remains row-regular) and modifying
some values of $F$ according to the edge colors of $u$. On these reduced graphs, we can consider a vertex $u$ from the regular vertex class $U$, and do the same. Once there is one remaining vertex in $U$, the two realizations will be the same, and thus, we transformed $G_1$
into $G_2$.
\end{proof}

Theorem~\ref{theo:full perturbations} says that perturbing the edges of at most three vertices in
the regular vertex class is sufficient to connect the space of realizations of a half-regular degree matrix $\mathcal{M}$, namely, a finite series of such perturbations is sufficient to transform any 
realization into another such that all intermediate perturbed graphs are also realizations of $\mathcal{M}$. A natural question is if such perturbations are also necessary. The answer to this question is yes. Latin squares can be considered as 1-factorizations of the complete bipartite graph $K_{n,n}$. The corresponding degree matrix $\mathcal{M} = (D,F)$ satisfies the definition of half-regular degree matrices. Indeed, both $D$ and $F$ are $n\times n$, all-1 matrices. 
It is well-known that already $5 \times 5$ Latin squares cannot be transformed into each other via Latin squares if only two rows are perturbed in a step, and the same claim holds for any $p \times p$ Latin squares, where $p$ is prime and $p\ge 5$ \cite{jm1996}.

\section{Markov Chain Monte Carlo for sampling realizations of a half-regular degree matrix}

In this section, we give a Markov Chain Monte Carlo (MCMC) method for sampling realizations of a half-regular degree matrix $\mathcal{M}$. Theorem~\ref{theo:full perturbations} says that the
perturbations that change the edges of at most 3 vertices of the regular vertex class are irreducible on the realizations of half-regular degree matrices. A na{\"\i}ve approach would make a random perturbation affecting the edges of $3$ vertices, and would accept this random perturbation if it is a realization of $\mathcal{M}$. Unfortunately, the probability that such random perturbation would generate a realization of $\mathcal{M}$  would tend to 0 exponentially quickly with the size of $\mathcal{M}$, making the MCMC approach very inefficient. Indeed, there were an exponential waiting time for an acceptance event, that is, when the random perturbation generates a realization of $\mathcal{M}$. 

Even if the random perturbation generates a realization of $\mathcal{M}$ with probability 1, the Metropolis-Hastings algorithm applying such random perturbation might generate a torpidly mixing Markov chain if the acceptance ratios are small. An example when this is the case can be found in \cite{mms2010}.

In this section, we design a transition kernel that generates such perturbations and its transition probabilities satisfy Equation~\ref{eq:highratio}, namely, the inverse of the acceptance ratio bounded by a polynomial function of the size of $\mathcal{M}$. To do this, we first have to generate random circuits and trails in auxiliary graphs that we define below.

\begin{definition}\label{def:trail}
Let $K(G,u,u')$ be the directed, edge labeled multigraph of the edge colored bipartite graph $G$ as defined in Definition~\ref{def:auxiliaryK}. Let $c_s$ and $c_e$ be two vertices of $K$ such that for any vertex $c\ne c_e$ of $K$, the number of outgoing edges of $c$ is greater or equal than the number of its incoming edges, and $c_s$ has more outgoing edges than incoming edges if $c_s \ne c_e$. We define the following function $f(K,c_s,c_e)$ that generates a random trail from $c_s$ to $c_e$ when $c_s \ne c_e$ and a random circuit when $c_s = c_e$.
\begin{enumerate}
\item Select uniformly a random outgoing edge $e$ of $c_s$ which is not a loop. Let the sequence of labels $P := v$, where $v$ is the label of $e$, namely, $P$ be a sequence containing one label. Let $c$ be the endpoint of $e$.
\item While $c \ne c_e$, select uniformly a random outgoing edge $e$ of $c$ which is not a loop and does not appear in $P$. Extend $P$ with the label of $e$, and set $c$ to the endpoint of $e$.
\end{enumerate}

Note that due to the pigeonhole rule, this random procedure will eventually arrive to $c_e$, since for any vertex $c \ne c_e$ reached by the algorithm, there exist at least one outgoing edge of $c$ which is not a loop and is not in the sequence $P$.
\end{definition}

The probability of a trail $\mathcal{T} = c_s, c_1, c_2, \ldots c_r, c_e$  or circuit 
$\mathcal{C} = (c_s, c_1, c_2, \ldots c_r)$ generated by $f(K, c_s, c_e)$ consists of the product of the inverses of the available edges going out from $c_s, c_1, c_2, \ldots c_r$. These probabilities will be denoted by $P(\mathcal{C})$ and $P(\mathcal{T})$.

We are going to perturb the graphs along the circuits and trails as defined below.

\begin{definition}
Let $\mathcal{C}$ (resp., $\mathcal{T}$) be a random circuit (resp., random trail) generated by $f(K(G,u,u'),c_s, c_e)$. Then the transformation $G*(\mathcal{C},u,u')$ ($G*(\mathcal{T},u,u')$) swaps the colors of the edges between $u$ and $v$ and between $u'$ and $v$ for all edge labels $v$ in $\mathcal{C}$ ($\mathcal{T}$).
\end{definition}

With these types of perturbations, we are going to define the random algorithm that generates a random perturbation.
\begin{figure}
\centering
\includegraphics[trim=7.5cm 4.5cm 6.5cm 6.5cm,angle=0, width=3.2in]{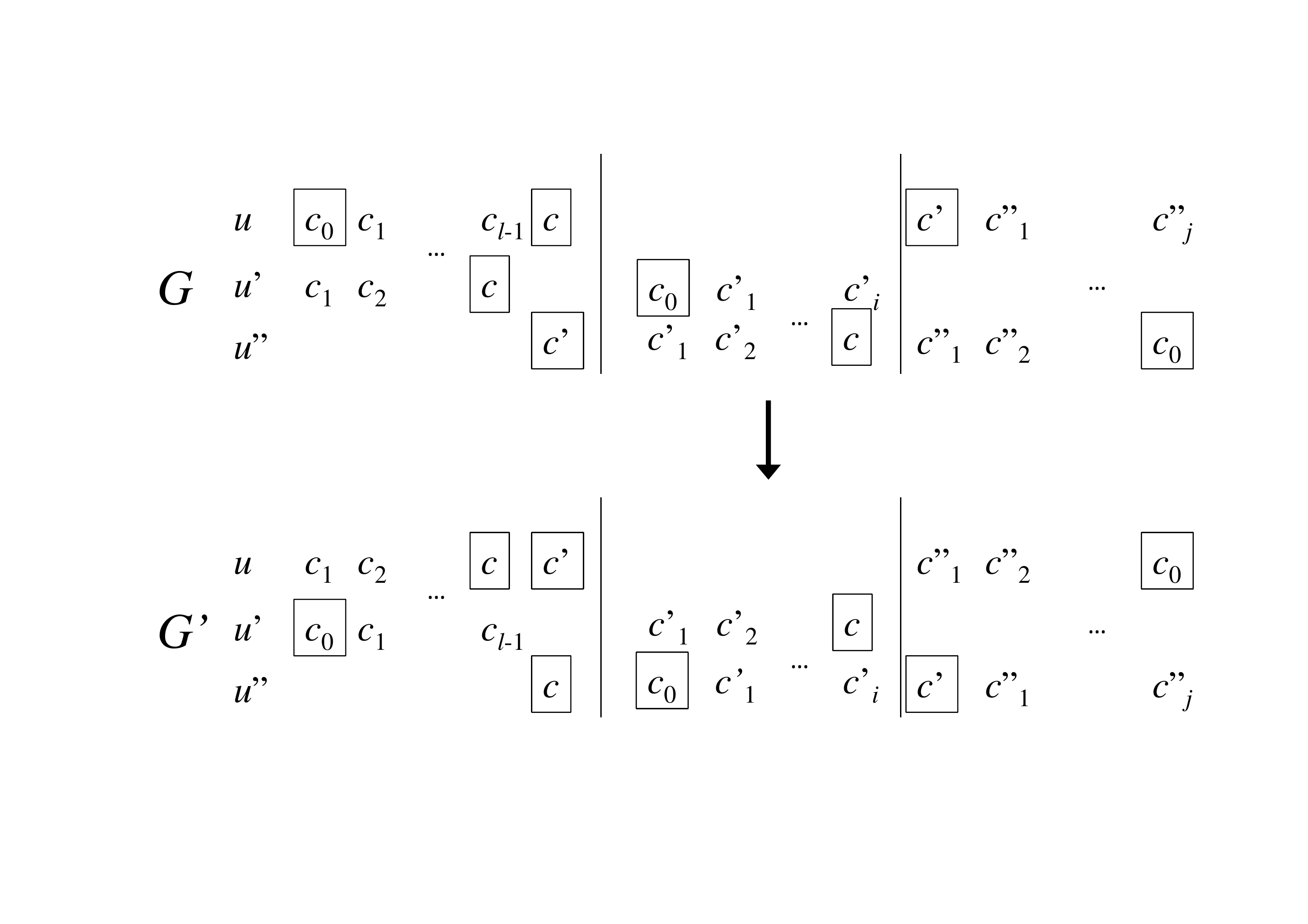}
\caption{This figure shows how Algorithm~\ref{alg:perturbation}, case II.(c)-(e) perturbs $G$ into a $G'$. Vertices $u$, $u'$ and $u"$ are the vertices whose edges are perturbed.
Couples of colors are swapped in three trails (see also Definition~\ref{def:trail}), furthermore a pair of colors $c$ and $c'$ are swapped. Some colors are boxed in order to help understand that the perturbation yields a realization of $\mathcal{M}$, see also the proof of Theorem~\ref{theo:perturbation}.
 The transformation swapping the columns until the first vertical line yields $\tilde{G}$, until the second vertical line yields $\bar{G}$. Note that for sake of readability, all color changes are indicated in separate columns, however, it is allowed that a column is used several times during the perturbation. Also note that due to readability, columns are in order as they follow each other in a trail, however, they might not be necessarily consecutive ones in $G$.}\label{fig:perturbation1}
\end{figure}

\begin{figure}
\centering
\includegraphics[trim=7.5cm 3.5cm 6.5cm 8.5cm,angle=0, width=3.2in]{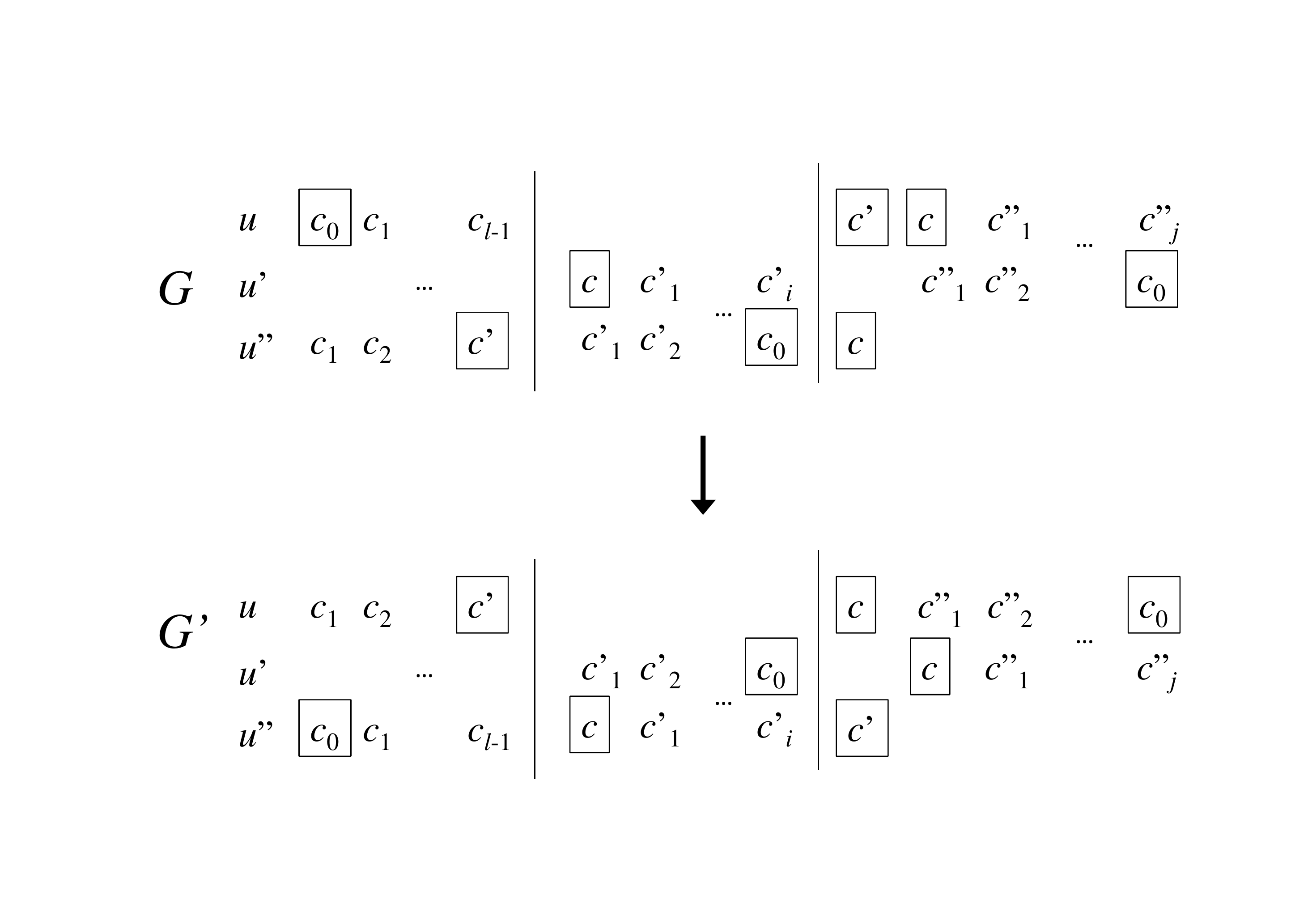}
\caption{This figure shows how Algorithm~\ref{alg:perturbation}, case III.(c)-(e) perturbes $G$ into a $G'$. Vertices $u$, $u'$ and $u"$ are the vertices whose edges are perturbed.
Couples of colors are swapped in three trails (see also Definition~\ref{def:trail}), furthermore a pair of colors $c$ and $c'$ are swapped. Some colors are boxed in order to help understand that the perturbation yields a realization of $\mathcal{M}$, see also the proof of Theorem~\ref{theo:perturbation}.
 The transformation swapping the columns until the first vertical line yields $\bar{G}$, until the second vertical line yields $\tilde{G}$. Note that for sake of readability, all color changes are indicated in separate columns, however, it is allowed that a column is used several times during the perturbation. Also note that due to readability, columns are in order as they follow each other in a trail, however, they might not be necessarily consecutive ones in $G$.}\label{fig:perturbation2}
\end{figure}

\begin{algorithm}\label{alg:perturbation}
Let $G(U,V,E)$ be a realization of the half-regular degree matrix $\mathcal{M}$. Do the following random perturbation on $G$.
\begin{enumerate}[I.]
\item With probability $\frac{1}{2}$, do nothing. This is technically necessary for the Markov chain surely be aperiodic. Any constant probability would suffice, $\frac{1}{2}$ is the standard choice for further technical reasons not detailed here, see for example, \cite{sj1989}.

\item With probability $\frac{1}{4}$, do the following
\begin{enumerate}
\item  Draw uniformly a random ordered pair of vertices $u$ and $u'$ from the vertex set $U$. Construct
the auxiliary graph $K(G, u, u')$, draw uniformly a random color $c_0$ and draw a random circuit $\mathcal{C}$ using $f(K(G,u,u'), c_0, c_0)$.

\item Chose a random integer $l$ uniformly from $[1,m]$. If  $l \ge |\mathcal{C}|$, then let $G'$ be $G*(\mathcal{C},u,u')$. $G'$ is the perturbed graph, exit the algorithm.

\item If $l < |\mathcal{C}|$, do the following (see also Figure~\ref{fig:perturbation1}). Let $e_1, e_2, \ldots$ denote the edges of circuit $\mathcal{C}$ starting with the edge going out from $c_0$.  Let $\mathcal{T}$ be the trail with edges $e_1, e_2, \ldots e_l$. 
Perturb $G$ to $G*(\mathcal{T},u,u')$.
Draw uniformly a $u"$ from $U \setminus \{u, u'\}$ and also uniformly a vertex $v$ from the subset of $V$ satisfying the condition that the color of the edge between $u$ and $v$  in $G*(\mathcal{T},u,u')$ is the last color in the trail $\mathcal{T}$. Let $c'$ denote the color of the edge between $u"$ and $v$ . Swap the colors of the edges between $u$ and $v$ and between $u"$ and $v$. Note that $c'$ might equal to $c$, and in that case, swapping the colors has no effect. Denote the so far perturbed graph $\tilde{G}$.

\item Construct $K(\tilde{G},u',u")$, and draw a random trail $\mathcal{T}_1$ using function $f(K(\tilde{G},u',u"), c_0, c)$.
Let $\bar{G}$ be $\tilde{G}*(\mathcal{T}_1, u',u")$.

\item Construct $K(\bar{G},u, u")$, and draw a random trail $\mathcal{T}_2$ using function $f(K(\bar{G},u,u"), c', c_0)$. Let $G'$, the perturbed graph be $\bar{G}*(\mathcal{T}_2,u,u")$.
\end{enumerate}

\item With probability $\frac{1}{4}$, do the following.
\begin{enumerate}
\item Draw uniformly a random ordered pair of vertices $u$ and $u"$ from the vertex set $U$. Construct $K(G,u,u")$, draw uniformly a random color $c_0$, and draw a random circuit $\mathcal{C'} = (c_0, c_1, \ldots)$ using $f(K(G,u,u"), c_0, c_0)$.

\item Draw uniformly a random integer from $[1,m]$. If  $l \ge |\mathcal{C'}|$, then let $G'$ be $G*(\mathcal{C'},u,u")$. $G'$ is the perturbed graph, exit the algorithm. 

\item If $l < |\mathcal{C}'|$, do the following (see also Figure~\ref{fig:perturbation2}). Let $\mathcal{T'}$ be the trail from $c_0$ to $c_l$, and let $\bar{G} = G*(\mathcal{T'},u,u")$, and let $c'$ denote $c_l$.
Draw uniformly a random vertex $u'$ from $U \setminus \{u, u"\}$. Build $K(\bar{G},u",u')$, and draw a random trail $\mathcal{T}_1'$ using $f(K(\bar{G},u",u'),c_0,c')$. Let $\mathcal{T}_1'[i]$ denote the prefix of the trail $\mathcal{T}_1'$ containing only the first $i$ edges of the trail. Let $e_i$ denote the $i$th edge of trail $\mathcal{T}_1'$. Construct the set of vertices $V'$ which contains all vertex $v_i$ such that the label of $e_i$ is $v_i$ and in the graph $\bar{G}*(\mathcal{T}_1'[i], u", u')$, there is a vertex $v$ such that the color of the edge between $u$ and $v$ is $c'$, the color of the edge between $u"$ and $v$ is the color of the edge between $u'$ and $v_i$, furthermore, $c$ is the first occurrence in the trail, that is, $\mathcal{T}'_1[i-1]$ does not contain $c$. 

\item 
If $V'$ is empty, then exit the algorithm, let the perturbed graph be the original graph. Otherwise,
draw a random vertex $v$ from $V'$. Shorten $\mathcal{T}_1'$ such that the last edge have label $v$. Let $\mathcal{T}_1"$ denote this trail. Let $\tilde{G}$ be $\bar{G}*(\mathcal{T}_1", u", u')$. If $u"$ has a deficiency $(+c-c')$ for some $c$ in $\tilde{G}$, then draw uniformly a vertex among the vertices $v"$ for which the color of the edge between $u$ and $v"$ is $c'$ and the color of the edge between $u"$ and $v"$ is $c$. Modify $\tilde{G}$ by swapping the colors of these two edges.

If $u"$ has no deficiency in $\tilde{G}$ then recall $c'$ to $c$.

\item Build $K(\tilde{G},u,u')$, and draw a random trail $\mathcal{T}_2'$ using $f(K(\tilde{G},u,u'), c, c_0)$. Let $G'$, the perturbed graph be $\tilde{G}*(\mathcal{T}_2',u,u')$.

\end{enumerate}

\end{enumerate}

\end{algorithm}

\begin{theorem}\label{theo:perturbation}
The random perturbation in Algorithm~\ref{alg:perturbation} has the following properties.
\begin{enumerate}
\item The generated $G'$ is a realization of $\mathcal{M}$, thus this random perturbation is the transition kernel of a Markov chain on the realizations of $\mathcal{M}$.
\item The perturbations are irreducible on the realizations of $\mathcal{M}$
\item The perturbations form a reversible kernel and for any $G$ and any $G'$ generated from $G$, 
\begin{equation}
\frac{T(G|G')}{T(G'|G)} \ge \frac{2}{m^5} \label{eq:claim}
\end{equation}
where $m$ is the number of vertices in $V$. Furthermore, if this reversible kernel is used in a Metropolis-Hastings algorithm, the expected waiting time (that is, how many Markov chain steps is necessary to leave the current state) at any state is bounded by $2m^5$.
\end{enumerate}
\end{theorem}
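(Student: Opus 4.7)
The plan is to treat the three claims separately: (1) reduces to a case-check against the two small-deficiency lemmas, (2) follows quickly from Theorem~\ref{theo:full perturbations}, and (3) is the substantial technical step.

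For (1), I would walk through Algorithm~\ref{alg:perturbation}. Case~I is trivial. In Case~II(b), swapping edge colors along a circuit of $K(G,u,u')$ preserves the color degrees at $u$ and $u'$ because every vertex of $K$ has equal in- and out-degrees inside a circuit, and the degrees of $v\in V$ are obviously unchanged; this is exactly the mechanism of Lemma~\ref{lem:broken-permutation}. For Case~II(c)-(e), I would track deficiencies along Figure~\ref{fig:perturbation1}: after the trail $\mathcal{T}$ ending at color $c$ is swapped, $u$ and $u'$ carry complementary $(+c_0-c)$ and $(+c-c_0)$ deficiencies; the single-edge swap at $v$ introduces $u''$ with deficiency $(+c-c')$ and relabels $u$'s deficiency to $(+c_0-c')$, producing precisely the three-vertex configuration of Lemma~\ref{lem:broken-permutation-ii}. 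The trail $\mathcal{T}_1$ is the one supplied by that lemma, reducing the configuration to a complementary deficiency on $u$ and $u''$, and $\mathcal{T}_2$ cancels the remainder as in Lemma~\ref{lem:broken-permutation}. Case~III is analogous with the order of the three auxiliary graphs reversed as in Figure~\ref{fig:perturbation2}.

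For (2), Theorem~\ref{theo:full perturbations} guarantees that any two realizations of $\mathcal{M}$ are connected by a finite sequence of perturbations each affecting at most three vertices of $U$. It suffices to show that every such elementary perturbation is produced by Algorithm~\ref{alg:perturbation} with positive probability. A two-vertex perturbation is a circuit swap in some $K(G,u,u')$, delivered by Case~II(b) whenever the uniform integer $l$ satisfies $l\ge |\mathcal{C}|$. A three-vertex perturbation has precisely the combinatorial shape constructed in Lemma~\ref{lem:one cycle proper perturbations}, which is what Case~II(c)-(e) (and symmetrically Case~III(c)-(e)) produces. Positive probability for the required ordered triples, starting colors, cut point $l$ and trails then gives irreducibility.

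The main obstacle is (3). The strategy is to pair every terminating random run $\omega$ of the algorithm taking $G$ to $G'$ with a canonical reverse run $\omega^{-1}$ taking $G'$ to $G$, and to show $P(\omega^{-1})/P(\omega)\ge 2/m^5$ term by term. The bookkeeping splits into cancelling factors -- the case weight $\tfrac14$, the factor $1/n(n-1)$ or $1/n(n-1)(n-2)$ for the ordered $U$-vertex choice, the uniform color $c_0$, and the uniform $l\in[1,m]$ -- and non-cancelling factors arising from the uniform selections of $v$ and from the trail probabilities. A trail of length $r$ produced by $f$ has probability equal to a product of $r$ reciprocals of admissible out-degrees in the auxiliary graph, each in $[1/m,1]$; the corresponding reverse trail traverses the same edge labels in the perturbed auxiliary graph, and the ratio of forward to reverse probabilities telescopes into per-vertex in-over-out-degree ratios bounded by $m$. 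The crucial structural observation is that a forward Case~II run corresponds after reversal to a Case~III run rather than another Case~II run (the ordered pair selected first is different), which is why the algorithm devotes equal weight $\tfrac14$ to the two cases. A careful count of the surviving factors of $m$ -- at most two from the length mismatch of $\mathcal{T}_1$ and $\mathcal{T}_2$, one from the size of $\mathcal{C}$, and two from the $v$-selections -- caps the ratio at $m^5$, yielding Equation~\ref{eq:claim}. The waiting time bound is then immediate: since $1-T'(G|G)\ge (2/m^5)(1-T(G|G))$ and $1-T(G|G)\ge \tfrac12$ in the standard Metropolis-Hastings setup because Case~I contributes a $\tfrac12$ self-loop while irreducibility from (2) guarantees a genuine non-self proposal inside Cases~II and III, the probability of leaving $G$ in one step is at least $1/(2m^5)$, so the expected waiting time is at most $2m^5$.
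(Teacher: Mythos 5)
Your proposal follows the paper's own proof essentially step for step: part (1) by tracking the three deficiencies through Lemmas~\ref{lem:broken-permutation} and~\ref{lem:broken-permutation-ii}, part (2) by observing that the algorithm realizes with positive probability the perturbations of Lemma~\ref{lem:one cycle proper perturbations} used in Theorem~\ref{theo:full perturbations}, and part (3) by pairing each Case~II run with an inverse Case~III run and bounding the cancellation of the trail probabilities and uniform selections by five factors of $m$. The only detail left implicit is the origin of the numerator $2$ in $2/m^5$ (the paper extracts it from the forward $v$-selection probability $1/(d_{j,1}+1)\le 1/2$); apart from that, and a harmless sign-convention slip in the deficiency labels, the accounting matches the paper's.
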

\begin{proof}
\hfill
\begin{enumerate}
\item When $l \ge |\mathcal{C}|$ or $l \ge |\mathcal{C}'|$ (cases II.(b) and III.(b) in Algorithm~\ref{alg:perturbation}), the constructed $G'$ differs on edges of two vertices of $U$, $u$ and $u'$ (or $u$ and $u"$). Since the swapped colors are along a circuit in the auxiliary graph $K$, the effects of color swaps cancel each other, and we get a realization of $\mathcal{M}$. 

When $l < |\mathcal{C}|$ (case II.(c)-(e) in Algorithm~\ref{alg:perturbation}), $G$ is transformed into an intermediate graph $\tilde{G}$, which has deficiency on three vertices $u$, $u'$ and $u"$. 
Vertex $u$ has $(+c'-c_0)$-deficiency, $u'$ has $(+c_0-c)$-deficiency, and $u"$ has $(+c-c')$-deficiency (see also the boxed colors on Figure~\ref{fig:perturbation1}).
The generated random trail $\mathcal{T}_1$ on $K(\tilde{G},u'u")$ goes from $c_0$ to $c$. In Lemma~\ref{lem:broken-permutation-ii}, we proved that any such trail transforms $\tilde{G}$ into a graph satisfying the conditions of Lemma~\ref{lem:broken-permutation}. The random trail $\mathcal{T}_2$ generated in $K(\bar{G},u,u")$ is from $c'$ to $c_0$, and Lemma~\ref{lem:broken-permutation} proves that swapping the edge colors along such trail transforms the graph into a realization of $\mathcal{M}$.

When $l < |\mathcal{C}'|$ (case III.(c)-(e) in Algorithm~\ref{alg:perturbation}), $G$ is transformed into a $\bar{G}$, in which $u$ has a $(+c'-c_0)$-deficiency and $u"$ has a $(+c_0-c')$ deficiency (see also the boxed colors on Figure~\ref{fig:perturbation2}). Then $\bar{G}$ transformed into a $\tilde{G}$, in which two cases is possible. Either $u"$ has a $(+c-c')$-deficiency and then $u$ has a $(+c'-c_0)$-deficiency and $u'$ has a $(+c_0-c)$-deficiency or $u"$ has no deficiency and then $u$ has a $(+c'-c_0)$-deficiency and $u'$ has a $(+c_0-c')$-deficiency. In the former case, a couple of colors $c$ and $c'$ are swapped between $u$ and $u"$ causing $u"$ has no deficiency and $u$ has $(+c-c_0)$ deficiency, which are cancelled by the perturbation along the trail $\mathcal{T}_2'$, thus arriving to a realization of $\mathcal{M}$. In the later case, the perturbation along the trail $\mathcal{T}_2'$ directly leads to a realization of $\mathcal{M}$.

\item It is sufficient to show that the algorithm generates the perturbations appear in Theorem~\ref{theo:full perturbations}, which are actually the perturbations appear in Lemma~\ref{lem:one cycle proper perturbations}. There are two types of perturbations in Lemma~\ref{lem:one cycle proper perturbations}. The simpler one swaps the colors of edges along a cycle. This happens when $t_1 = l$ in the proof of Lemma~\ref{lem:one cycle proper perturbations} and also when $t_j = l$. Since a cycle is a circuit, and any circuit in the auxiliary graph $K$ constructed in the algorithm can be generated with non-zero probability, this type of perturbation is in the transition kernel.

The larger perturbation in Lemma~\ref{lem:one cycle proper perturbations} first swap the colors of edges of $u$ and $u'$ along a path, then swap the colors of two edges between $u$ and $v$ and $u"$ and $v$ for some $v$, then swap the edges of $u'$ and $u"$ along a path to eliminate the deficiency of $u"$, finally, swap the color of the edges $u$ and $u"$
along a path to eliminate their deficiencies, thus generate a realization. Since any path is a trail, the given algorithm can generate such perturbations when $l < |\mathcal{C}|$ (case II.(c)-(e) in Algorithm~\ref{alg:perturbation}).

\item We show that for any perturbation of $G$ to $G'$, there exists a perturbation from $G'$ to $G$, and we also compare the probabilities of the perturbations. Recall that the number of vertices in $U$ is $n$, the number of vertices in $V$ is $m$, and the number of colors is $k$; $n$, $m$ and $k$ will appear in the probabilities below several times. 

When the perturbation affects the edges of only two vertices, $u$ and $u'$ or $u$ and $u"$ (cases II.(b) or III.(b) in Algorithm~\ref{alg:perturbation}), the algorithm first draws these vertices with probability
$\frac{1}{n(n-1)}$, where $n = |U|$. Also a random color $c_0$ is drawn with probability $\frac{1}{k}$. Then a random circuit $\mathcal{C} = (c_0, c_1, c_2, \ldots c_r)$ is generated with probability $P(\mathcal{C})$. Finally, a random $l \ge r$ is generated, this happens with $\frac{m-r-1}{m}$ probability, where $r = |\mathcal{C}|$ and the color of the edges indicated by the circuit are swapped. The so-obtained $G'$ can be generated not only in this way, but an arbitrary such $c_i$ can be generated as the starting point of the circuit which is visited only once in the circuit.

To perturb back $G'$ to $G$, we first have to select the same pair of vertices $u$ and $u'$, with the same, $\frac{1}{n(n-1)}$ probability. Then the random circuit $\mathcal{C}' = (c_0, c_r, c_{r-1}, \ldots c_1)$ must be generated in the constructed auxiliary directed multigraph $K'$. Since for all color, the auxiliary graph $K$ constructed from $G$ has the same outgoing edges than the auxiliary graph $K'$ constructed from $G'$, $P(\mathcal{C}) = P(\mathcal{C}')$. Furthermore, this claim holds for all possible circuits having the possible starting colors $c_i$. Finally, generating the same $l$ has the same probability, thus we can conclude that for this type of perturbation, $P(G'|G) = P(G|G')$, thus Equation~\ref{eq:claim} holds.

When the perturbation affects the edges of three vertices, $u$, $u'$ and $u"$, then $G$ is first perturbed to $\tilde{G}$, $\tilde{G}$ is perturbed to $\bar{G}$ and finally $\bar{G}$ is perturbed to $G'$, or the third type of perturbation is applied in the algorithm where $G$ first perturbed into $\bar{G}$ then $\tilde{G}$ and finally $G'$ (case II.(c)-(e) and III.(c)-(e) in Algorithm~\ref{alg:perturbation}). We show that these two types of perturbations are inverses of each other in the following sense. For any perturbation generated in case II.(c)-(e) in Algorithm~\ref{alg:perturbation}, its inverse can be generated in case III.(c)-(e) in Algorithm~\ref{alg:perturbation}, and vice versa, for any perturbation generated in case III.(c)-(e), its inverse can be generated in case II.(c)-(e).

First we consider the perturbation generated by Algorithm~\ref{alg:perturbation} in case II.(c)-(e). In that case, first the ordered pair of vertices ($u$, $u'$) is generated with probability $\frac{1}{n(n-1)}$ and a random color $c_0$ with probability $\frac{1}{k}$. A circuit $\mathcal{\mathcal{C}} = (c_0, c_1, \ldots c_{l}, \ldots c_r)$ is generated together with a random number $l$ with probability $\frac{1}{m}$. The generated number $l$ must be smaller than $|\mathcal{C}|$, and thus, only the trail $\mathcal{T} = c_0, c_1, \ldots c_l$ is important. The probability of the trail consists of the product of the inverses of the number of available outgoing edges. Let $P(\mathcal{T})$ denote this probability.  The colors along the trail are swapped. Then random $u"$ is generated from the appropriate set of vertices in $U\setminus \{u,u'\}$ with probability $\frac{1}{n-2}$
and also a vertex $v$ is generated with probability $\frac{1}{d_{j,1}+1}$ where $j$ is the index of color $c_l$. The colors $c (= c_l)$ and $c'$ are swapped. Therefore, $P(\tilde{G}|G) = \frac{1}{n(n-1)(n-2)km(d_{j,1}+1)}P(\mathcal{T})$.
Then a trail $\mathcal{T}_1$ is generated from $c'$ to $c_0$ in the auxiliary graph $K(\tilde{G},u'u")$, and $\tilde{G}$ is transformed to $\bar{G} = \tilde{G}*(\mathcal{T}_1,u',u")$ Thus, $P(\bar{G}|\tilde{G}) = P(\mathcal{T}_1)$.
 Finally, a random trail $\mathcal{T}_2$ from $c'$ to $c_0$ is generated in $K(\bar{G},u,u")$ with probability $P(\mathcal{T}_2)$, which is $P(G'|\bar{G})$. Since
$$P(G'|G) = P(\tilde{G}|G)P(\bar{G}|\tilde{G})P(G'|\bar{G})$$
we get that
\begin{equation}
P(G'|G) = \frac{1}{n(n-1)(n-2)km(d_{j,1}+1)}P(\mathcal{T})P(\mathcal{T}_1)P(\mathcal{T}_2). \label{eq:G2Gprime}
\end{equation}

To transform back $G'$ to $G$, first $G'$ should be transformed back to $\bar{G}$, then $\bar{G}$ back to $\tilde{G}$ and finally $\tilde{G}$ back to $G$. This can be done in case III.(c)-(e) by first drawing the same $u$ and $u"$, then drawing a $\mathcal{T}'$ which is exactly the inverse of $\mathcal{T}_2$, then the same $u'$ must be selected and the trail $\mathcal{T}_1"$ must be the inverse of $\mathcal{T}_1$, the same edges with color $c$ and $c'$ must be swapped, and finally the trail $\mathcal{T}_2'$ must be the inverse of $\mathcal{T}$. First we show that these trails can be inverses of each other.

$\mathcal{T}$ is a shortening of a circuit with start and end vertex $c_0$. As such, it contains $c_0$ only once, but might contain $c$ several times. $\mathcal{T}'_2$ is a trail from $c$ to $c_0$. Therefore, it can contain $c$ several times, but contains $c_0$ only once. Thus, the inverse of any $\mathcal{T}$ might be a $\mathcal{T}'_2$ and vice versa.

$\mathcal{T}_1$ is a trail from $c_0$ to $c$. Therefore, it might contain $c_0$ several times, but contains $c$ only once. $\mathcal{T}"_1$ is a shortening of a trail from $c_0$ to $c'$. It might contain $c_0$ several times, but can contain $c$ only once, due to its definition (see case III.(c)-(d) of Algorithm~\ref{alg:perturbation}). Hence, the inverse of any $\mathcal{T}_1$ can be a $\mathcal{T}"_1$, and vice versa.

$\mathcal{T}_2$ is a trail from color $c'$ to $c_0$. It might hit $c'$ several times, but only once $c_0$. $\mathcal{T}'$ is a shortening of a circuit with start and end vertex $c_0$. As such, it contains $c_0$ only once (as start and end vertex), but might contain $c'$ several times. Thus the inverse of any $\mathcal{T}_2$ might be a $\mathcal{T}'$ and vice versa.

We are going to calculate the probability of a random perturbation in case III.(c)-(e). First, the ordered pair of vertices $u$ and $u"$ should be selected with probability $\frac{1}{n(n-1)}$. The auxiliary directed multigraph $K(G',u,u")$ is constructed, a random $c_0$ is selected, and a random circuit $(c_0, c_1 \ldots c', \ldots c_{r'})$ in $K(G',u,u")$ is generated. The probability that the randomly generated $l$ is exactly the index of the appropriate occurrence of $c'$ in the trail $\mathcal{T}'$ is $\frac{1}{m}$. Then only the trail $\mathcal{T}' = c_0, c_1, \ldots c'$ is interesting. Thus, transforming back $G'$ to $\bar{G}$ has probability $\frac{1}{n(n-1)km}P(\mathcal{T}')$. $\mathcal{T}'$ in $K(G',u,u")$ is the inverse of the trail $\mathcal{T}_2$ in $K(\bar{G},u,u")$ and $K(G',u,u")$ and $K(\bar{G},u,u")$ differs in inverting the trail $\mathcal{T}_2$. Therefore $P(\mathcal{T}_2)$ and $P(\mathcal{T}')$ differ in the number of outgoing edges from $c_0$ at the begining of the trail $\mathcal{T}_2$ and the number of outgoing edges from $c'$ at the beginning of the trail $\mathcal{T}'$. Since the number of outgoing edges might vary between 1 and $m$, the ratio of the two probabilities bounded by
\begin{equation}
\frac{1}{m} \le \frac{P(\mathcal{T}')}{P(\mathcal{T}_2)} \le m \label{eq:bound}
\end{equation}

 After swapping the colors along the trail $T'$, $u'$ should be randomly generated, it has probability $\frac{1}{n-2}$.
A random trail $\mathcal{T}_1'$ is generated using $f(K(\bar{G}, u', u"), c_0, c')$. A random $v$ from the subset $V'$ is generated in III.(d) of the algorithm. This has probability $\frac{1}{|V'|}$. Then the trail $\mathcal{T}_1'$ is shortened to $\mathcal{T}_1"$, and $\tilde{G}$ is obtained by transforming $\bar{G}$ along this trail. 

It is not easy to calculate exactly the probability $P(\tilde{G}|\bar{G})$ since the set $V'$ depends on the generated trail $\mathcal{T}_1'$. However, the size of $V'$ cannot be greater than $m$ and lower than $1$, therefore the following inequality holds:

$$\frac{1}{m}P(\mathcal{T}_1") \le P(\tilde{G}|\bar{G}) \le P(\mathcal{T}_1")$$
The trail $\mathcal{T}_1"$ is the inverse trail $\mathcal{T}_1$, therefore, the ratio of their probabilities is also between $\frac{1}{m}$ and $m$.

Finally, the same edge colors $c$ and $c'$ must be swapped back, and $\tilde{G}$ must be transformed back  to $G$ along the trail $\mathcal{T}_2'$. The probability that the selected $v"$ in III.(d) of the Algorithm~\ref{alg:perturbation} is a particular vertex depends on the number of vertices from which $v"$ is selected. Therefore this probability is again between $\frac{1}{m}$ and $1$. Altogether, the probability of the backproposal probability is bounded between
\begin{eqnarray}
\frac{1}{n(n-1)(n-2)km^3}P(\mathcal{T}')P(\mathcal{T}_1")P(\mathcal{T}_2') & \le P(G|G')  \le & \nonumber \\
\le \frac{1}{n(n-1)(n-2)km}P(\mathcal{T}')P(\mathcal{T}_1")P(\mathcal{T}_2') &&
\label{eq:Gprime2G}
\end{eqnarray} 
 
Comparing Equations~\ref{eq:G2Gprime}~and~\ref{eq:Gprime2G}, and also considering that the ratio of the probabilities of the trails and corresponding inverse trails are between $\frac{1}{m}$ and $m$, we get for the ratio of proposal and backproposal probabilities that

\begin{equation}
\frac{2}{m^5} \le
\frac{P(G|G')}{P(G'|G)} \le m^4 \label{eq:ratio}
\end{equation}

Case II. is chosen with probability $\frac{1}{4}$. Given that case II. is selected, the probability that Algorithm~\ref{alg:perturbation} generates a realization being different from the current realization is 1. If edges of two vertices in $U$ are perturbed, then the perturbation is accepted with probability 1. If edges of three vertices in $U$ are perturbed, then the perturbation is accepted with probability
$$\frac{P(G|G')}{P(G'|G)} \ge \frac{2}{m^5}$$
according to Equation~\ref{eq:ratio}. Thus, the probability that the Markov chain defined by the Metropolis-Hastings algorithm does not remain in the same state is greater or equal than $\frac{1}{2m^5}$. The expected waiting time is upper bounded by the inverse of this probability, that is, $2m^5$.
\end{enumerate}
\end{proof}

\section{Concluding remarks and future works}

We considered the half-regular factorizations of the complete bipartite graph, and we proved that it is a tractable version of the edge packing problem. Above the existence theorem, we also give sufficient and necessary perturbations to transform solutions into each other. When these perturbations are the transition kernel of a Markov chain Monte Carlo method, the inverse of the acceptance ratios are polynomial bounded. This result might be the first step to prove rapid mixing of the Markov chain. However, proving rapid mixing might be particularly hard. The speed of convergence of a similar Markov chain on Latin squares is a twenty years old open problem.

Approximate sampling and counting are equally hard computational problems for a large set of counting problems, the so-called self-reducible counting problems \cite{jvv1986}. Latin squares are not self-reducible counting problems, since deleting a few rows from a Latin square yield a partial Latin square. Half-regular factorizations of the complete bipartite graph are closer to be self-reducible, since deleting a few lines from a half-regular factorization yields another half-regular factorization of a smaller complete bipartite graph. Unfortunately, further technical conditions are necessary; we have to require that a subset of the colors in the first line be fixed. It is discussed in \cite{ekms2015}, why this restriction is necessary. A possible further work could be to give a Markov chain which is irreducible on such restricted space. A proof of rapid mixing of that Markov chain would yield to an efficient random approximation (FPRAS, see also \cite{vazirani2003}) on the number of realizations of half-regular factorizations of the complete bipartite graph. Since Latin squares are also half-regular factorizations, it would also yield an efficient random approximation of Latin squares, too. At the moment, the known lower and upper bounds on the number Latin squares are far away each other \cite{vlw1992}.

Finally, a further work might be to relax the half-regularity to a condition where we require that the factors be almost half-regular, that is, for each factor, the degrees in the first vertex class are either $d_i$ or $d_i+1$.

\bibliographystyle{plain}

\begin{thebibliography}{99}

\bibitem{bentzetal2009} Bentz, C., Costa, M.-C., Picouleau, C., Ries, B., de Werra, D.: Degree-constrained edge partitioning in graphs arising from discrete tomography  {\sl J. Graph
Algorithms Appl.} {\bf 13(2)} (2009) 99--118.

\bibitem{buschetal2012} Busch, A., Ferrara, M., Hartke, S., Jacobson, M., Kaul, H., West, D.: Packing of graphic n-tuples {\sl J. Graph Theory} {\bf 70(1)} (2012) 29--39.

\bibitem{chen1988} Chen, Y-C.: A short proof of {K}undu's k-factor theorem, {\sl Discrete Mathematics} {\bf 71(2)} (1988) 177--179.

\bibitem{cd1998} Chrobak, M., D\"urr, C.: Reconstructing polyatomic structures from discrete X-rays: NP-completeness proof for three atoms, {\sl in: L. Brim, J. Gruska,
J. Zlatu$\mathrm{\check{s}}$ka (Eds.), Mathematical Foundations of Computer Science 1998, in: Lecture Notes in Computer Science, Springer, Berlin Heidelberg}, {\bf 1450}  (1998) 185--193.

\bibitem{delgenio2014} Del Genio, C.I.: An efficient algorithm for sampling directed graphs, {\sl online resource}, (2014) \begin{verbatim} http://www2.warwick.ac.uk/fac/cross_fac/complexity/people/staff/delgenio/digsamp/
\end{verbatim}

\bibitem{dgm2012}  D\"urr, C., Gu{\'\i}$\mathrm{\tilde{n}}$ez, F., Matamala, F. M.:  Reconstructing 3-colored grids from horizontal and vertical projections is NP-hard: A solution to the 2-atom problem in discrete tomography, {\sl SIAM J. Discrete Math.} {\bf 26(1)} (2012) 330--352.


\bibitem{ekms2015} Erd{\H o}s, P., Kiss, S.Z., Mikl\'os, I., Soukup, L.:  Approximate Counting of Graphical Realizations, {\sl PLoS ONE} {\bf 10(7)} (2015) e0131300. 

\bibitem{ggd1999} Gardner, R., Gritzmann, P., Prangenberg, D.: On the computational complexity of reconstructing lattice sets from their x-rays {\sl Discrete Math.} {\bf 202(1--3)}
(1999) 45--71.

\bibitem{greenhill2011} Greenhill, C.:  A polynomial bound on the mixing time of a Markov chain for sampling regular directed graphs, {\sl Electronic Journal of Combinatorics} {\bf 18(1)} (2011), \#P234.

\bibitem{gmt2011} Gu{\'\i}$\mathrm{\tilde{n}}$ez, F., Matamala, F.M., Thomass\'e, S.: Realizing disjoint degree sequences of span at most two: A tractable discrete tomography problem, {\sl Discrete Appl. Math} {\bf 159(1)} (2011) 23--30.


\bibitem{hastings1970} Hastings, W.K.: Monte Carlo Sampling Methods Using Markov Chains and Their Applications, {\sl  Biometrika} {\bf 57(1)} (1970) 97--109.


\bibitem{hmcd2015} Hillebrand, A., McDiarmid, C.: Colour degree matrices of graphs with at most one cycle {\sl Discrete Applied Mathematics} {\bf in press} (2015)

\bibitem{jm1996} Jacobson, M.T., Matthews, P.: Generating uniformly distributed random latin squares, {\sl Journal of Combinatorial Designs} {\bf 4(6)} (1996), 404--437. 

\bibitem{jvv1986} Jerrum, M.R., Valiant, L.G, Vazirani, V.V.: Random generation of combinatorial structures from a uniform distribution {\sl Theoretical Computer Science} {\bf 43(2--3)} (1986) 169--188 

\bibitem{kimetal2012}  Kim, H.,  Del Genio, C.I.,  Bassler, K.E.,  Toroczkai, Z.: Constructing and sampling directed graphs with given degree sequences. {\sl New J. Phys.} {\bf 14} (2012) 023012.

\bibitem{kw1973} Kleitman, D.J., Wang, D.L.: Algorithms for constructing graphs and digraphs with given valences and factors {\sl Discrete Math.} {\bf 6} (1973), 78--88.

\bibitem{kundu1974} Kundu, S.: Generalizations of the k-factor theorem {\sl Discrete Math.} {\bf 9} (1974), 173--179.

\bibitem{lamar2009} LaMar, M.D.:  On uniform sampling simple directed graph realizations of degree sequences, {\sl arXiv} (preprint, 2009) arXiv:0912.3834v1 [cs.DM].

\bibitem{markovetal2014}  Markov, N. T., Ercsey-Ravasz, M., Ribiero Gomes, A.R., Lamy, C., Vezoli, J., Magrou, L., Misery, P., Falchier, A., Quilodran, R., Sallet, J., Gariel, M.A., Gamanut, R., Huissoud, C., Clavagnier, S., Giroud, P., Sappey-Marinier, D., Barone, P., Dehay, C., Toroczkai, Z., Knoblauch, K., Van Essen, D.C., Kennedy, H.: A weighted and directed interareal connectivity matrix for macaque cerebral cortex, {\sl Cereb. Cortex} {\bf 24(1)} (2014) 17--36.

\bibitem{metropolisetal1953}  Metropolis, N.; Rosenbluth, A.W.; Rosenbluth, M.N.; Teller, A.H.; Teller, E.: Equations of State Calculations by Fast Computing Machines. {\sl Journal of Chemical Physics} {\bf 21 (6)} (1953) 1087--1092. 

\bibitem{mms2010} Mikl\'os, I., M\'elyk\'uti, B., Swenson, K.: The Metropolized Partial Importance Sampling MCMC mixes slowly on minimum reversal rearrangement paths {\sl ACM/IEEE Transactions on Computational Biology and Bioinformatics}, {\bf 4(7)} (2010) 763--767.



\bibitem{sj1989} Sinclair, A., Jerrum, M.: Approximate counting, uniform generation and rapidly
mixing Markov chains, {\sl Information and Computation} {\bf 82} (1989), 93--133. 



\bibitem{vlw1992} van Lint, J.H., Wilson, R.M.: A course in combinatorics, {\sl Cambridge University Press} (1992) pages 186--187. 

\bibitem{vazirani2003} Vazirani, V.V.: Approximation Algorithms {\sl Berlin: Springer.} (2003) pages 294--295. 

\end{thebibliography}

\end{document}